\newtheorem{dfn} [subsection]{Definition}
\newtheorem{obs} [subsection]{Remark}
\newtheorem{prop}[subsection]{Proposition}
\newtheorem{teor}[subsection]{Theorem}
\newtheorem{lema}[subsection]{Lemma}
\def\supp{\operatorname{Cons}}
\def\Cons{\operatorname{Cons}}
\def\SS{\operatorname{S}}
\def\CC{\operatorname{C}}
\def\AA{\operatorname{A}}
\def\SL{\operatorname{SL}}
\def\GL{\operatorname{GL}}
\def\Sp{\operatorname{Sp}}
\def\Ar{\operatorname{Ar}}
\def\Ch{\operatorname{Ch}}
\def\eM{\operatorname{M}}
\def\Gal{\operatorname{Gal}}
\def\Hol{\operatorname{Hol}}
\def\Lin{\operatorname{Lin}}
\def\Irr{\operatorname{Irr}}
\def\Ker{\operatorname{Ker}}
\numberwithin{equation}{section}
\begin{document}
\selectlanguage{english}
\frenchspacing

\title{On a generalization of monomial groups}
\author{Mircea Cimpoea\c s$^1$}
\date{}

\maketitle

\begin{abstract}
We study a class of finite groups, called almost monomial groups,
which generalize the class of monomial groups and is connected with the theory 
of Artin L-functions. Our method of research is based on finding similarities with the theory
of monomial groups, whenever it is possible.

\noindent \textbf{Keywords:} almost monomial group; finite group; Artin L-function.

\noindent \textbf{2020 Mathematics Subject
Classification:} 20C15; 11R42;
\end{abstract}

\footnotetext[1]{ \emph{Mircea Cimpoea\c s}, University Politehnica of Bucharest, Faculty of
Applied Sciences, %Department of Mathematical Methods and Models, 
Bucharest, 060042, Romania and Simion Stoilow Institute of Mathematics, Research unit 5, P.O.Box 1-764,
Bucharest 014700, Romania, E-mail: mircea.cimpoeas@upb.ro,\;mircea.cimpoeas@imar.ro}
%\tableofcontents

\section*{Introduction}

The notion of \emph{almost monomial groups}, which is a loose generalization of monomial groups,
was introduced by F.\ Nicolae in a recent paper \cite{monat}. 
A finite group $G$ is called \emph{almost monomial}, if for any two irreducible characters $\chi\neq \phi$ of $G$,
there exist a subgroup $H\leqslant G$ and a linear character $\lambda$ of $H$, such that $\chi$ is a constituent
of the induced character $\lambda^G$ and $\phi$ is not. Some basic properties of almost monomial groups
are presented in \cite{lucrare}. The aim of this paper is to continue the study of almost monomial groups. 
We mention that a previous non related notion of "almost monomial groups" appeared in \cite{book}.
%Our method is based on similarities with the theory of monomial groups.

%Our paper is divided in eight sections, the first one being this introduction. 
In the first section, we recall some basic definitions and properties from the character theory of finite groups.
Also, we present the connections between almost monomial groups and the theory of Artin L-functions,
which motivates our study. In the second section, we prove some equivalent 
characterizations of almost monomial groups, see Theorem \ref{31}.

A natural way to study the almost monomial groups is to find properties similar with those of monomial groups. For instance,
according to a theorem of Dornhoff \cite{dorn}, if $G$ is monomial and $N\unlhd G$ is a normal Hall subgroup, then $N$ is also
monomial. A similar property is no longer valid for almost monomial groups; an example is provided in the Section $3$. Using
Clifford's theory, we prove that if $G$ is almost monomial and $N\unlhd G$ that satisfies certain technical conditions, then 
$N$ is also almost monomial; see Theorem \ref{43}.

In the fourth section we introduce the notion of \emph{relative almost monomial} groups, similar to relative monomial groups (see \cite[p. 86]{isaac}), and we 
prove several results. If $G$ is a group and $N\unlhd G$ is a normal subgroup, we say that $G$ is \emph{relative almost monomial}
with respect to $N$ if for any irreducible characters $\chi\neq \phi$ of $G$, there exists a subgroup $H\leqslant G$ with
 $N\subseteq H\subseteq G$ and an irreducible character $\psi$ of $H$, such that the restriction $\psi_N$ is irreducible,
$\chi$ is a constituent of $\psi^G$ and $\phi$ is not a constituent of $\psi^G$.

In Proposition \ref{52} we note that a group 
$G$ is almost monomial if and only if it is relative almost monomial w.r.t. the trivial subgroup and, also, we prove that if
$G$ is relative almost monomial w.r.t. $N\unlhd G$, then $G/N$ is almost monomial. In Theorem \ref{53} we
prove that if $G$ is solvable and relative almost monomial w.r.t. $N\unlhd G$ and all the Sylow subgroups of $N$ are abelian,
then $G$ is almost monomial. In Theorem \ref{54} we prove that $G_1,G_2$ are almost monomial w.r.t. $N_1, N_2$, if and only if
$G_1\times G_2$ is almost monomial w.r.t. $N_1\times N_2$. In Theorem \ref{57} we prove that if $G$ is relative almost monomial w.r.t. 
$N\unlhd G$, and $A\unlhd G$ with $A\subseteq N$, then $G/A$ is relative almost monomial w.r.t. $N/A$. In Theorem \ref{58} we prove that
if $G$ is almost monomial and $A\unlhd G$ is abelian, then $G$ is relative almost monomial w.r.t. $A$.

In his thesis \cite{guan}, Guan Aun How investigated nM-groups (sM-groups): finite groups whose
irreducible  characters are all induced from linear characters of normal (subnormal) subgroups.
In Section $5$, we introduce their counterparts in the frame of almost monomial groups, namely normal almost monomial groups 
and subnormal almost monomial groups and we show that these classes of almost monomial groups are
closed under taking factor and direct products, see Theorem \ref{62}. Also, we give examples of groups which are subnormally
almost monomial, but are not normally almost monomial, and groups which are almost monomial, but are not subnormally almost monomial.

%A key ingredient in the proof of Dade's theorem, i.e., any solvable group is isomorphic to subgroup of some monomial group, is the following result: If $A$ is monomial and $C$ is cyclic of prime order $p>0$, then the wreath product $W=A\wr C$ is monomial. It is natural to ask
%the following question: If $A$ is almost monomial and $C$ is cyclic of prime order $p$, is it true
%that $W=A\wr C$ is almost monomial? In Theorem $7.1$ we give a partial answer, more precisely, we show that the answer is true if some
%additional condition holds. In general, we believe the answer to be false, but we couldn't find a counterexample, because of complexity
%of computations involved.

In Section $6$ we present our functions in GAP \cite{gap} which determine if a group $G$ is almost monomial, normally almost monomial, 
subnormally almost monomial or almost monomial with respect to a normal subgroup $N\unlhd G$. 

\section{Preliminaries and motivation}

Let $G$ be a finite group. We denote $\Ch(G)$ the set of characters associated to the linear 
representations of the group $G$ over the complex field. We denote $\Irr(G)=\{\chi_1,\ldots,\chi_r\}$ 
the set of irreducible characters of $G$. It is well known that any character $\chi$ of $G$ can be uniquely written 
as linear combination $\chi=a_1\chi_1+a_2\chi_2+\cdots+a_r\chi_r$ where $a_i$'s are nonnegative integers and
not all of them are zero. A character $\lambda$ of $G$ is called linear, if $\lambda(1)=1$. Obviously, the linear
characters are irreducible.

If $H\leqslant G$ is a subgroup and $\chi$ is a character of $G$, then the restriction of $\chi$ to $H$, denoted by $\chi_H$,
is a character of $H$. If $\theta$ is a character of $H$, then 
$$\theta^G(g):= \frac{1}{|H|}\sum_{x\in G} \theta^0(xgx^{-1}),\;\text{ for all }g\in G,$$
where $\theta^0(x)=\theta(x)$, for all $x\in H$, and $\theta^0(x)=0$, for all $x\in G\setminus H$, is a character of $G$, which is
called the character induced by $\theta$ on $G$.

A character $\chi\in \Ch(G)$ is called \emph{monomial} if there exist a subgroup $H \leqslant G$ and a linear character
$\lambda$ of $H$ such that $\chi=\lambda^G$. A group $G$ is called \emph{monomial}, or $M$-group, if all the irreducible characters of 
$G$ are monomial. We mention that, according to a theorem of Taketa, see \cite{taketa} or \cite[Theorem A]{dorn}, all monomial groups are solvable.
In general, the converse is not true, the smallest example being the group $\SL_2(\mathbb F_3)$; see \cite[p. 67]{isaac2}.

A character $\chi\in \Ch(G)$ is called \emph{quasi-monomial} if there exist a subgroup $H \leqslant G$, a linear character
$\lambda$ of $H$ and an integer $d\geq 1$ such that $\lambda^G = d\cdot \chi$. 
A group $G$ is called \emph{quasi-monomial} if any irreducible character of $G$ is quasi-monomial.
It is not known if there are quasi-monomial groups which are not monomial. However, it was proved in \cite{konig} that a special class of 
quasi-monomial groups are solvable.

% A \emph{class function} on $G$, is a function $f:G\to \mathbb C$ which is constant on the conjugacy classes of $G$; the characters of $G$ are
% a special case of class functions. If $f,h:G\to \mathbb C$ are two class functions, the scalar product of $f$ and $g$ is
% $$ \langle f,h \rangle:= \frac{1}{|G|}\sum_{g\in G}f(g)\overline{h(g)} \in \mathbb C.$$
It is well known that 
%$\Irr(G)$ is an orthonormal basis in the $\mathbb C$-vector space of class functions. In particular, 
if
$\chi\in\Ch(G)$, then $\chi = \langle \chi_1,\chi \rangle \chi_1 + \cdots + \langle \chi_r,\chi \rangle \chi_r$. Hence, the 
\emph{set of constituents} of $\chi$ is
$\supp(\chi):=\{\phi \in \Irr(G)\;:\; \langle \phi,\chi \rangle > 0\}$.
We recall the following definition from \cite{monat}; see also \cite{lucrare}:

\begin{dfn}
A group $G$ is called \emph{almost monomial} (or \emph{$AM$-group}), if for any $\chi\neq \phi \in \Irr(G)$ 
there exists a subgroup $H\leqslant G$ and a linear character $\lambda$ of $H$ such that $\chi\in\supp(\lambda^G)$ and $\phi\notin \supp(\lambda^G)$.
\end{dfn}

Obviously, any (quasi)-monomial group $G$ is also almost monomial; the converse however is false. For instance, the already mentioned group $\SL_2(\mathbb F_3)$
is almost monomial, but is not monomial. We recall the main results from \cite{lucrare}, regarding the almost monomial groups:

\begin{teor}\label{22} We have that:
\begin{enumerate}
\item[(1)] The symmetric group $\SS_n$ is almost monomial for any $n\geq 1$.(\cite[Theorem 2.1]{lucrare})
\item[(2)] If $G$ is almost monomial and $N\unlhd G$ is a normal subgroup, then $G/N$ is almost monomial.(\cite[Theorem 2.2]{lucrare})
\item[(3)] $G_1$ and $G_2$ are almost monomial if and only if $G_1\times G_2$ is almost monomial. (\cite[Theorem 2.3]{lucrare})
\end{enumerate}
\end{teor}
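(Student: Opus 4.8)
The plan is to handle the three statements by quite different methods: part (1) via the explicit representation theory of the symmetric group, and parts (2) and (3) by formal manipulations with induction and Frobenius reciprocity, with (3) reducing in part to (2). For (1), I would index $\Irr(S_n)$ by partitions $\lambda\vdash n$, writing $\chi^\lambda$ for the associated irreducible character, and build the required separating characters from Young subgroups. For a partition $\rho=(\rho_1,\rho_2,\dots)$ let $S_\rho=S_{\rho_1}\times S_{\rho_2}\times\cdots$ be the corresponding Young subgroup; both its trivial character $1_{S_\rho}$ and its sign character $\mathrm{sgn}_{S_\rho}$ are linear. Young's rule gives $\supp((1_{S_\rho})^{S_n})=\{\chi^\mu:\rho\trianglelefteq\mu\}$ in the dominance order $\trianglelefteq$, since the Kostka number $K_{\mu\rho}$ is positive exactly when $\rho\trianglelefteq\mu$. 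Twisting by $\mathrm{sgn}$ and using $\mathrm{sgn}\cdot\chi^\mu=\chi^{\mu'}$ (conjugate partition) together with the fact that conjugation reverses dominance yields $\supp((\mathrm{sgn}_{S_\rho})^{S_n})=\{\chi^\mu:\mu\trianglelefteq\rho'\}$. Thus trivial inductions realize the dominance-upward-closed constituent sets and sign inductions the downward-closed ones.

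Given $\chi^\mu\neq\chi^\nu$, I would split into two cases. If $\mu\not\trianglelefteq\nu$, then $(1_{S_\mu})^{S_n}$ contains $\chi^\mu$ (as $\mu\trianglelefteq\mu$) but not $\chi^\nu$, so $H=S_\mu$ with $\lambda=1_{S_\mu}$ works. Otherwise $\mu\trianglelefteq\nu$ with $\mu\neq\nu$, hence $\nu\not\trianglelefteq\mu$, and $(\mathrm{sgn}_{S_{\mu'}})^{S_n}$ contains $\chi^\mu$ (as $\mu\trianglelefteq(\mu')'=\mu$) but not $\chi^\nu$, so $H=S_{\mu'}$ with its sign character works. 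Since for distinct partitions one cannot have both $\mu\trianglelefteq\nu$ and $\nu\trianglelefteq\mu$, these cases are exhaustive. The main obstacle here is purely bookkeeping with the dominance order, namely pinning down the direction of Young's rule and of the sign twist, rather than anything structural.

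For (2), let $\bar\chi\neq\bar\phi\in\Irr(G/N)$ and let $\chi,\phi\in\Irr(G)$ be their inflations, so $N\leqslant\ker\chi\cap\ker\phi$. Almost monomiality of $G$ yields $H\leqslant G$ and a linear $\lambda$ on $H$ with $\chi\in\supp(\lambda^G)$ and $\phi\notin\supp(\lambda^G)$. The key point is that $\lambda$ descends to $G/N$: by Frobenius reciprocity $\langle\lambda,\chi_H\rangle>0$, and since $N\leqslant\ker\chi$ the restriction $\chi_H$ is trivial on $H\cap N$, forcing every constituent of $\chi_H$, and in particular $\lambda$, to satisfy $H\cap N\leqslant\ker\lambda$. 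Hence $\lambda$ factors through a linear character $\bar\lambda$ of $\bar H:=HN/N\cong H/(H\cap N)$. Comparing inner products over $H$ and over $\bar H$, which is legitimate because both arguments are trivial on $H\cap N$, gives $\langle\bar\lambda^{G/N},\bar\chi\rangle=\langle\lambda^G,\chi\rangle>0$ and $\langle\bar\lambda^{G/N},\bar\phi\rangle=\langle\lambda^G,\phi\rangle=0$, which is exactly what is required.

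For (3), the forward implication is immediate from (2), since $G_1\cong(G_1\times G_2)/(\{1\}\times G_2)$ and symmetrically for $G_2$, so each factor is a quotient of the product. For the converse, recall $\Irr(G_1\times G_2)=\{\alpha\times\beta:\alpha\in\Irr(G_1),\ \beta\in\Irr(G_2)\}$ and that induction respects external products, whence $\supp((\lambda_1\times\lambda_2)^{G_1\times G_2})=\supp(\lambda_1^{G_1})\times\supp(\lambda_2^{G_2})$. Given $\chi_1\times\chi_2\neq\phi_1\times\phi_2$ we may assume $\chi_1\neq\phi_1$; almost monomiality of $G_1$ then gives $H_1\leqslant G_1$ and a linear $\lambda_1$ with $\chi_1\in\supp(\lambda_1^{G_1})$ and $\phi_1\notin\supp(\lambda_1^{G_1})$. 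Taking $H=H_1\times\{1\}$ and $\lambda=\lambda_1\times 1$, the second factor induces the regular character of $G_2$, so $\supp(\lambda^{G_1\times G_2})=\supp(\lambda_1^{G_1})\times\Irr(G_2)$; this contains $\chi_1\times\chi_2$ but not $\phi_1\times\phi_2$, since $\phi_1\notin\supp(\lambda_1^{G_1})$. No case distinction on the second coordinate is needed.
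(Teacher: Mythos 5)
The paper does not prove Theorem \ref{22} at all; it merely recalls it from \cite{lucrare}, so there is no in-paper proof to compare against. Your argument is correct and is essentially the standard one used in that reference: part (1) via Young's rule, the dominance order and the sign twist, part (2) via inflation together with the observation that $H\cap N\leqslant\Ker(\lambda)$ so that $\lambda$ descends to $HN/N$, and part (3) via the quotient argument for one direction and external products (with $1_{\{1\}}^{G_2}=\rho_{G_2}$) for the other.
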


\begin{obs}\emph{
We recall that a group $G$ is called \emph{rational} if $\chi(g)\in\mathbb Q$ for any character $\chi$ of $G$ and any $g\in G$.
It is well known that the symmetric groups $\SS_n$ are rational; see \cite{james}. We may ask if all rational groups are almost monomial. 
This question has a negative answer. The projective symplectic group $\Sp_6(\mathbb F_2)$ is a simple rational group, see \cite{feit}, but is not 
almost monomial, according to our computations in GAP \cite{gap}. On the other hand, any finite Weyl group is rational, see \cite[Corollary 1.14]{spring}. 
Our computer experiments yield us to conjecture that the finite Weyl groups of the type $A_n$, $B_n$, $C_n$ and $D_n$ are almost monomial.}
\end{obs}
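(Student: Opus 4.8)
The remark makes two substantive assertions: that rationality is strictly weaker than almost monomiality, witnessed by $G=Sp_6(\mathbb F_2)$, and the conjecture on classical Weyl groups. The plan for the first is to verify separately that $G$ is rational and that $G$ is not an $AM$-group. Rationality is a citation: by \cite{feit} all character values of $Sp_6(\mathbb F_2)$ are rational. (Alternatively, $W(E_7)\cong \mathbb Z/2\times Sp_6(\mathbb F_2)$, and since every finite Weyl group is rational by \cite[Corollary 1.14]{spring} while rationality is inherited by direct factors and by quotients, $G$ is rational; I note in passing that this places the counterexample just outside the classical types $A,B,C,D$ covered by the conjecture.)

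The point requiring work is that $G$ is not almost monomial, and the first step is to recast the defining condition into a finitely checkable form. By Frobenius reciprocity, for $H\leqslant G$ and a linear character $\lambda$ of $H$ one has $\langle\lambda^G,\chi\rangle=\langle\chi_H,\lambda\rangle$, so $\chi\in\supp(\lambda^G)$ if and only if $\lambda$ is a constituent of the restriction $\chi_H$. Thus $G$ is almost monomial precisely when, for every ordered pair $\chi\neq\phi\in\Irr(G)$, some subgroup $H$ carries a linear character $\lambda$ with $\langle\chi_H,\lambda\rangle>0=\langle\phi_H,\lambda\rangle$. This is exactly what is tested in GAP \cite{gap}: one runs over representatives of the conjugacy classes of subgroups $H\leqslant G$ (induction depends on $(H,\lambda)$ only up to conjugacy), parametrizes the linear characters of each $H$ through its abelianization $H/H'$, computes the constituents of every $\lambda^G$, and checks the separation condition pair by pair; the certificate of failure is a single ordered pair $\chi\neq\phi$ admitting no such $(H,\lambda)$. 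The main obstacle here is purely computational: $|Sp_6(\mathbb F_2)|=1451520$ and its subgroup lattice is very large, so a naive enumeration is infeasible. This is managed by working only up to conjugacy, by reducing to the abelianizations, and by terminating the search for a pair as soon as a separating $(H,\lambda)$ is found; once the single unseparated pair is located, exhibiting it is cheap.

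For the conjecture the strategy is to extend the argument behind Theorem \ref{22}(1) from $S_n$ to the remaining classical Weyl groups. Type $A_{n-1}$ gives $W(A_{n-1})\cong S_n$ and is already settled. For types $B_n$ and $C_n$ the Weyl group is the hyperoctahedral group $\mathbb Z/2\wr S_n$, whose irreducible characters are indexed by ordered pairs of partitions $(\alpha,\beta)$ with $|\alpha|+|\beta|=n$, and type $D_n$ is then reached through its index-two subgroup and the standard restriction and splitting rules. One would attach to each bipartition a linear character of a suitable Young-type subgroup of $\mathbb Z/2\wr S_n$, combining the trivial and sign characters of the base $(\mathbb Z/2)^n$ with the partition data, whose induction contains that bipartition as a top constituent but excludes a competitor. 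The hard part will be making this separation work simultaneously in both components $\alpha$ and $\beta$: the $S_n$ proof separates partitions through the dominance order and Young's rule, and the bipartition analogue must control the two parts at once by a combined dominance-type criterion, which is precisely the combinatorial input that is still missing and keeps the statement at the level of a conjecture.
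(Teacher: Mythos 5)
Your proposal is correct and takes essentially the same route as the paper: the remark rests on citing \cite{feit} for the rationality of $Sp_6(\mathbb F_2)$ and on a GAP \cite{gap} computation showing it is not almost monomial, and the paper's own GAP code (Section 6) implements precisely the procedure you describe --- running over conjugacy-class representatives of subgroups and their linear characters and testing the separation of constituents, with the Weyl-group statement left, as you correctly keep it, at the level of a conjecture supported by computer experiments. Your side remarks (rationality of $Sp_6(\mathbb F_2)$ via $W(E_7)\cong \mathbb Z/2\times Sp_6(\mathbb F_2)$, and the bipartition strategy for the hyperoctahedral case) are sound additions but do not alter the substance.
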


The main motivation in studying almost monomial groups is given by their connection with the theory of Artin L-functions.
Let $K/\mathbb Q$ be a finite Galois extension. 
For the character $\chi$ of a representation of the Galois group $G:=Gal(K/\mathbb Q)$
on a finite dimensional complex vector space, let $L(s,\chi):=L(s,\chi,K/\mathbb Q)$ be the corresponding Artin L-function 
(\cite[P.296]{artin2}). 
Artin conjectured that $L(s,\chi)$ is holomorphic in $\mathbb C\setminus \{1\}$ and $s=1$ is a simple pole. 
% As a consequence of the fact that the group associated to $\eM(G)$ is $\eR(G)$,
Brauer \cite{brauer} proved that $L(s,\chi)$ is meromorphic in $\mathbb C$, of order $1$.

\begin{obs}\emph{
Let $f\in \mathbb Q[X]$ be a generic polynomial of degree $n\geq 2$. Then the Galois group 
$\Gal(\mathbb Q(f)/\mathbb Q)$ is isomorphic to $\SS_n$, which, according to Theorem \ref{22}(1), is an almost monomial group.}
\end{obs}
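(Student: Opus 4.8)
The statement that a generic monic rational polynomial of degree $n$ has Galois group $S_n$ is classical, and the plan is to reduce it to two standard ingredients: the determination of the Galois group of the \emph{general} polynomial over a field of rational functions, and Hilbert's irreducibility theorem, which transfers this group to a generic rational specialisation of the coefficients.

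First I would treat the general polynomial. Introduce indeterminates $T_0,\ldots,T_{n-1}$, put $F(X)=X^n+T_{n-1}X^{n-1}+\cdots+T_1X+T_0$, and let $K=\mathbb Q(T_0,\ldots,T_{n-1})$. If $y_1,\ldots,y_n$ are the roots of $F$ in a splitting field $L$, then $L=\mathbb Q(y_1,\ldots,y_n)$ and, up to sign, the $T_i$ are the elementary symmetric functions of the $y_j$. The group $S_n$ acts faithfully on $L$ by permuting the roots; by the fundamental theorem of symmetric functions the fixed field $L^{S_n}$ is exactly $K$, and Artin's theorem on fixed fields then gives that $L/K$ is Galois with $\Gal(L/K)\cong S_n$. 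Equivalently, the $T_i$ are algebraically independent over $\mathbb Q$ and $[L:K]=n!$.

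Next I would descend from $K$ to $\mathbb Q$. Since $\mathbb Q$ is a Hilbertian field, Hilbert's irreducibility theorem produces a thin set $\Sigma\subset\mathbb Q^n$ such that, for every coefficient vector $(a_0,\ldots,a_{n-1})\in\mathbb Q^n\setminus\Sigma$, the specialised polynomial $f(X)=X^n+a_{n-1}X^{n-1}+\cdots+a_0$ is separable with splitting field $\mathbb Q(f)$ satisfying $\Gal(\mathbb Q(f)/\mathbb Q)\cong\Gal(L/K)\cong S_n$. This is exactly what the genericity of $f$ should mean: the coefficients avoid the exceptional thin set $\Sigma$. Invoking Theorem \ref{22}(1) then identifies $\Gal(\mathbb Q(f)/\mathbb Q)\cong S_n$ as an almost monomial group, which is the assertion of the remark.

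The step I expect to be the main obstacle is the descent, that is, the control of the Galois group under specialisation. Producing $S_n$ over the function field $K$ is purely formal once symmetric functions are available, whereas guaranteeing that generic rational specialisations retain the full group $S_n$ — rather than collapsing to a proper transitive subgroup — is precisely the nontrivial content of Hilbert's irreducibility theorem. For an explicit check in concrete cases one may instead use Dedekind's theorem: for a prime $p$ not dividing the discriminant of $f$, the degrees of the irreducible factors of $f\bmod p$ are the cycle lengths of a Frobenius element of $\Gal(\mathbb Q(f)/\mathbb Q)\leqslant S_n$. Selecting primes so that $f\bmod p$ is irreducible (giving an $n$-cycle, hence transitivity), factors as a degree-$(n-1)$ irreducible times a linear factor (giving an $(n-1)$-cycle), and factors as an irreducible quadratic times linear factors (giving a transposition), one finds that the Galois group is transitive, the $(n-1)$-cycle forces $2$-transitivity and hence primitivity, and by Jordan's theorem a primitive subgroup of $S_n$ containing a transposition is all of $S_n$.
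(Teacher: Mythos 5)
Your argument is correct and is exactly the standard justification of this classical fact, which the paper states as a remark without proof: $\Gal(L/K)\cong S_n$ for the general polynomial over $\mathbb Q(T_0,\ldots,T_{n-1})$ via symmetric functions, followed by Hilbert's irreducibility theorem to transfer the group to all specialisations outside a thin set (this being the precise meaning of ``generic''). Nothing further is needed, since the almost monomiality of $S_n$ is then supplied by Theorem \ref{22}(1) as in the remark.
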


Let $\chi_1,\ldots,\chi_r$ be the irreducible characters of $G$, $f_1=L(s, \chi_1),\ldots,f_r=L(s,\chi_r)$ the corresponding Artin L-functions.
In \cite{forum} we proved that $f_1,\ldots,f_r$ are algebraically independent over the field of meromorphic functions of order $<1$.
We consider
$$\Ar:=\{f_1^{k_1}\cdot\ldots\cdot f_r^{k_r}\mid k_1\geq 0,\ldots,k_r\geq 0\}$$
the multiplicative semigroup of all  L-functions. For $s_0\in\mathbb C,s_0\neq 1$ let $\Hol(s_0)$ be the
subsemigroup of $\Ar$ consisting of the L-functions which are holomorphic at $s_0$. 
It is well known, that if $\psi$ is a quasi-monomial character of $G$, then $L(s,\psi)$ is holomorphic 
on $\mathbb C\setminus \{1\}$; see for instance \cite[Lemma 1.2]{konig}. As a direct consequence, it 
follows that if $G$ a quasi-monomial, then Artin's conjecture holds for $G$.

The main result of \cite{monat} is the following:

\begin{teor}\label{25}
If $G$ is almost monomial, then the following are equivalent:
\begin{enumerate}
\item[(1)] Artin's conjecture is true: $\Hol(s_0)=\Ar.$
\item[(2)] The semigroup $\Hol(s_0)$ is factorial. 
\end{enumerate}
\end{teor}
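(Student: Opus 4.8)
The plan is to translate the statement into a question about a submonoid of a free commutative monoid, and then to settle it by one clever use of almost monomiality together with an extreme-ray count. By the algebraic independence of $f_1,\dots,f_r$ over the meromorphic functions of order $<1$ \cite{forum}, the map $f_1^{k_1}\cdots f_r^{k_r}\mapsto(k_1,\dots,k_r)$ identifies $\Ar$ with the free monoid $\mathbb N^r$. By Brauer's theorem each $f_i$ is meromorphic at $s_0$, so $n_i:=\ord_{s_0}f_i\in\mathbb Z$ is well defined, and since the order of a product is the sum of the orders, $\Hol(s_0)$ corresponds to $M:=\{k\in\mathbb N^r:\sum_{i=1}^r k_in_i\ge 0\}$. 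Note that statement (1), i.e. $\Hol(s_0)=\Ar$, is equivalent to $n_i\ge 0$ for all $i$, which is exactly Artin's conjecture at $s_0$.

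Then $(1)\Rightarrow(2)$ is immediate: if $\Hol(s_0)=\Ar\cong\mathbb N^r$, it is free, hence factorial. For $(2)\Rightarrow(1)$ I would argue by contraposition: assuming $N:=\{i:n_i<0\}\neq\emptyset$, I would show that $M$ is not factorial. First, since $\zeta_K=\prod_i f_i^{\chi_i(1)}$ is holomorphic on $\mathbb C\setminus\{1\}$, one has $\sum_i\chi_i(1)n_i\ge 0$, which forces $P:=\{i:n_i>0\}\neq\emptyset$.

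The crucial step, and the only place where almost monomiality enters, is to rule out $|P|=1$. Suppose $P=\{p\}$ and pick $j_0\in N$. As $\chi_{j_0}\neq\chi_p$, almost monomiality provides a subgroup $H\leqslant G$ and a linear character $\lambda$ of $H$ with $\chi_{j_0}\in\Cons(\lambda^G)$ and $\chi_p\notin\Cons(\lambda^G)$. Now $\lambda^G$ is monomial, so $L(s,\lambda^G)$ is holomorphic at $s_0\neq 1$; writing $L(s,\lambda^G)=\prod_i f_i^{m_i}$ with $m_i=\langle\chi_i,\lambda^G\rangle\ge 0$ and $m_p=0$, and using that $n_i\le 0$ for every $i\neq p$, we get $\ord_{s_0}L(s,\lambda^G)=\sum_{i\neq p}m_in_i\le 0$. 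As this order is also $\ge 0$, every summand $m_in_i$ vanishes; in particular $m_{j_0}n_{j_0}=0$ with $n_{j_0}<0$ forces $m_{j_0}=0$, i.e. $\chi_{j_0}\notin\Cons(\lambda^G)$, a contradiction. Therefore $|P|\ge 2$.

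Finally I would finish geometrically. Writing $Z=\{i:n_i=0\}$, I would regard $M$ as the set of lattice points of the rational cone $C=\mathbb R_{\ge 0}^r\cap\{x:\sum_i x_in_i\ge 0\}$, a full-dimensional cone with $M=C\cap\mathbb Z^r$. A reduced factorial affine monoid is free on its atoms, so its atoms are $\mathbb Z$-linearly independent and generate $C$; hence factoriality of $M$ would force $C$ to be simplicial, i.e. to have exactly $r=\dim C$ extreme rays. But the extreme rays of $C$ are the $e_i$ with $i\in P$, the $e_i$ with $i\in Z$, and, for each pair $(i,j)\in P\times N$, the primitive vector on $\{x_i,x_j\ge 0,\ n_ix_i=|n_j|x_j\}$; their number is $|P|+|Z|+|P|\,|N|$, which exceeds $r=|P|+|Z|+|N|$ precisely because $|P|\ge 2$ and $|N|\ge 1$. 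Hence $C$ is not simplicial, $M$ is not free, and therefore not factorial, completing the contraposition. I expect the main work to lie in this last step: justifying cleanly that a factorial affine monoid is free on its atoms and that its cone is therefore simplicial, and verifying the extreme-ray list; equivalently, one can make the obstruction fully explicit by producing, from two indices in $P$ and one in $N$, a single element of $M$ that admits two distinct factorizations into atoms.
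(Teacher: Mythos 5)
You should first be aware that the paper you were given does not actually prove Theorem \ref{25}: it is quoted as the main result of \cite{monat}, so there is no in-paper argument to compare against, and your proof has to be judged on its own. Judged so, it is correct, modulo the two standard facts you yourself flag. The reduction of $\Ar$ to the free monoid $\mathbb{N}^r$ via the algebraic independence of $f_1,\dots,f_r$, the identification of $\Hol(s_0)$ with $M=\{k\in\mathbb{N}^r:\sum_i k_i n_i\ge 0\}$ where $n_i=\ord_{s_0}f_i$, and the easy implication $(1)\Rightarrow(2)$ are all fine. The heart of the matter is exactly where you locate it: almost monomiality is used once, to force $|P|\ge 2$, and your argument there (holomorphy of the monomial L-function $L(s,\lambda^G)$ at $s_0\ne 1$ makes $\sum_{i\ne p}m_in_i$ both $\ge 0$ and $\le 0$, hence zero termwise, contradicting $m_{j_0}>0$) is sound; this step is genuinely necessary, since for $|P|=1$ the monoid can be free, e.g.\ $\{(a,b)\in\mathbb{N}^2:a\ge b\}$. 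The concluding polyhedral step is also correct: a reduced, cancellative, finitely generated factorial monoid is freely generated by its atoms, so it has exactly $r$ atoms (the rank of the group it generates), while every extreme ray of $C$ contains an atom and you exhibit $|P|+|Z|+|P|\,|N|>r$ distinct extreme rays. In a final write-up you should verify that each mixed ray $|n_j|e_i+n_ie_j$ really has $r-1$ independent active constraints, or, more elementarily, bypass the cone language by exhibiting a double factorization directly, as in the model case $n=(1,1,-1)$ where $(1,1,1)=e_1+(0,1,1)=e_2+(1,0,1)$. Compared with the original proof in \cite{monat}, which works multiplicatively inside the semigroup of L-functions, your linearization turns the failure of factoriality into a purely lattice-geometric statement about a half-space slice of $\mathbb{N}^r$; this buys transparency and an explicit description of the atoms of $\Hol(s_0)$, at the cost of importing the (standard but not free) dictionary between factorial affine monoids and simplicial cones.
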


Our main result of \cite{lucrare} is the following:

\begin{teor}\label{26}
If $G$ is almost monomial and $s_0$ is not a common zero for any two distinct L-functions $f_k$ and $f_l$ then all Artin L-functions of $K/\mathbb Q$ are holomorphic at $s_0$.  
\end{teor}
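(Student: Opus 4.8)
The plan is to reduce to the irreducible L-functions and then work entirely with orders of vanishing. Since any Artin L-function of $K/\mathbb{Q}$ is a product $\prod_i f_i^{a_i}$ with $a_i\geq 0$, it suffices to prove that each $f_i$ is holomorphic at $s_0$. By Brauer's theorem each $f_i$ is meromorphic, so $n_i:=\ord_{s_0}(f_i)$ is a well-defined integer, negative exactly when $f_i$ has a pole at $s_0$; the goal becomes showing $n_i\geq 0$ for all $i$. The hypothesis that $s_0$ is not a common zero of two distinct L-functions says precisely that at most one index $i$ satisfies $n_i>0$.

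The key input is the good behaviour of induced L-functions. For a subgroup $H\leqslant G$ with fixed field $K^H$ and a linear character $\lambda$ of $H$, the invariance of Artin L-functions under induction gives $L(s,\lambda^G,K/\mathbb{Q})=L(s,\lambda,K/K^H)$, and since $\lambda$ is one-dimensional this abelian L-function is holomorphic on $\mathbb{C}\setminus\{1\}$ (compare \cite[Lemma 1.2]{konig}). Writing $\lambda^G=\sum_i a_i\chi_i$ with $a_i=\langle\chi_i,\lambda^G\rangle\geq 0$, multiplicativity of L-functions gives $L(s,\lambda^G)=\prod_i f_i^{a_i}$, which is therefore holomorphic at $s_0\neq 1$. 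Taking orders at $s_0$ yields the inequality $\sum_i a_i n_i\geq 0$ for every such $\lambda$.

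Next I would argue by contradiction: suppose some $f_j$ has a pole at $s_0$, i.e. $n_j<0$. By the hypothesis there is at most one index $m_0$ with $n_{m_0}>0$; every other index has order $\leq 0$, and in particular $j\neq m_0$. Applying the almost monomial property to the pair $\chi_j\neq\chi_{m_0}$ (or, if no index has positive order, to $\chi_j$ and any other irreducible character) produces a subgroup $H$ and a linear character $\lambda$ of $H$ with $\chi_j\in\Cons(\lambda^G)$ while $\chi_{m_0}\notin\Cons(\lambda^G)$. In terms of the coefficients $a_i=\langle\chi_i,\lambda^G\rangle$ this means $a_j>0$ and $a_{m_0}=0$. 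Then every index $i$ with $a_i>0$ satisfies $n_i\leq 0$, because the unique index of positive order has been suppressed, and the pole term contributes $a_j n_j<0$; hence $\sum_i a_i n_i\leq a_j n_j<0$, contradicting $\sum_i a_i n_i\geq 0$. This forces $n_i\geq 0$ for all $i$, which is the claim.

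The step I expect to be the main obstacle is the holomorphy of $L(s,\lambda^G)$ at $s_0$: one must observe that although $\lambda^G$ need not be irreducible, it is still a genuine monomial character whose L-function factors through the abelian L-function $L(s,\lambda,K/K^H)$, and it is exactly there that holomorphy away from $s=1$ genuinely originates. The remaining work is bookkeeping with orders of vanishing, but the decisive feature is that the almost monomial property lets us arrange $a_{m_0}=0$, suppressing precisely the single index whose positive order could otherwise cancel the pole in the inequality.
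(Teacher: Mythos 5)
Your proof is correct and takes essentially the same route as the source of this theorem (the present paper only states Theorem \ref{26}, citing \cite{lucrare} for the proof): one passes to the order $n_i=\ord_{s_0}(f_i)$, uses that $L(s,\lambda^G)=L(s,\lambda,K/K^H)=\prod_i f_i^{a_i}$ is holomorphic at $s_0\neq 1$ to get $\sum_i a_i n_i\geq 0$, and uses the almost monomial property to suppress the unique index of positive order allowed by the hypothesis, contradicting the existence of a pole. No gaps; your handling of the case where no $f_i$ vanishes at $s_0$ is also fine.
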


\section{A combinatorial characterization}

Let $G$ be a finite group with $\Irr(G)=\{\chi_1,\ldots,\chi_r\}$. We consider $\eM(G) \subseteq \Ch(G)$, the subsemigroup generated by monomial characters, i.e.
$\psi\in \eM(G)$ if there exist some subgroups $H_1,\ldots,H_k$ of $G$ and some linear characters $\lambda_i$ of $H_i$, $1\leq i\leq k$,
such that $\psi=\lambda_1^G+\cdots+\lambda_k^G$.

% For $1\leq t\leq r$, we consider the sets
% \begin{equation}
% M_t(G):=\{\chi\in \eM(G)\;:\;|\supp(\chi)|=t\}. 
% \end{equation}
For $1\leq t\leq  r$, we consider the numbers
\begin{equation*}
 L_t(G):=|\{\supp(\chi)\;:\;\chi\in \eM(G)\text{ and }|\supp(\chi)|=t\}|.
\end{equation*}
Obviously, $L_t(G)\leq \binom{r}{t}$. Also,
note that $L_1(G)=r$ if and only if the group $G$ is quasi-monomial.
For any group $G$, the \emph{regular character} is
\begin{equation*}\label{roge}
\rho_G:=d_1\chi_1+\cdots+d_r\chi_r = 1_{\{1\}}^G,\text{ where } d_i=\chi_i(1),\; 1\leq i\leq r.
\end{equation*}
It follows that $L_r(G)=1$. We consider the numbers: 
$$N_{r,t}:=\binom{r}{t} - \binom{r-2}{t-1} + 1,\;\text{for all }1\leq t\leq r-1.$$
We prove the following result, which is similar to \cite[Theorem 1.6]{cim}:

\begin{teor}\label{31}
 The following are equivalent:
\begin{enumerate}
 \item[(1)] $G$ is almost monomial.
 \item[(2)] There exists $1\leq t\leq r-1$ such that $L_t(G)\geq N_{r,t}$.
 \item[(3)] $L_{r-1}(G)=r$.
\end{enumerate}
\end{teor}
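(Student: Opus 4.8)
The plan is to translate the group-theoretic statement into a combinatorial statement about the family
$$\mathcal S:=\{\supp(\psi)\;:\;\psi\in\eM(G)\}\subseteq 2^{\Irr(G)},$$
and then argue entirely within this family. Identifying $\Irr(G)$ with $[r]=\{1,\dots,r\}$, I would first record two structural facts. First, $\mathcal S$ is closed under union: if $\psi,\psi'\in\eM(G)$ then $\psi+\psi'\in\eM(G)$, and since all coordinates are nonnegative, $\supp(\psi+\psi')=\supp(\psi)\cup\supp(\psi')$. Second, $[r]\in\mathcal S$, because $\rho_G=1_{\{1\}}^G\in\eM(G)$ has full support. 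The quantities in the statement are simply $L_t(G)=|\{S\in\mathcal S:|S|=t\}|$.

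The crucial reduction I would establish is that almost monomiality is equivalent to a separation property of $\mathcal S$: for every ordered pair $\chi\neq\phi$ there is some $S\in\mathcal S$ with $\chi\in S$ and $\phi\notin S$. Separation by a single monomial character $\lambda^G$ is a special case; conversely, if some $\psi=\lambda_1^G+\cdots+\lambda_k^G\in\eM(G)$ separates $(\chi,\phi)$, then $\chi$ lies in some $\supp(\lambda_i^G)$ while $\phi$ lies in none, so that single $\lambda_i^G$ already separates. Phrased pointwise, almost monomiality asserts that for each $i$ the intersection of all members of $\mathcal S$ containing $i$ equals $\{i\}$.

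With this dictionary I would prove the cycle (1)$\Rightarrow$(3)$\Rightarrow$(2)$\Rightarrow$(1). For (1)$\Rightarrow$(3): fix $i$; for each $j\neq i$ almost monomiality provides $S_j\in\mathcal S$ with $j\in S_j$, $i\notin S_j$, and union-closedness gives $\bigcup_{j\neq i}S_j=[r]\setminus\{i\}\in\mathcal S$. As there are exactly $r$ subsets of size $r-1$, this forces $L_{r-1}(G)=r$. For (3)$\Rightarrow$(2) one computes $N_{r,r-1}=\binom{r}{r-1}-\binom{r-2}{r-2}+1=r$, so $t=r-1$ witnesses (2). For (2)$\Rightarrow$(1) I would argue the contrapositive: if $G$ is not almost monomial there is a pair $i\neq j$ such that every member of $\mathcal S$ containing $i$ also contains $j$; hence no set of the form "$i$ in, $j$ out" lies in $\mathcal S$, and for each $t$ there are exactly $\binom{r-2}{t-1}$ such forbidden sets of size $t$. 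Therefore $L_t(G)\le\binom{r}{t}-\binom{r-2}{t-1}=N_{r,t}-1$ for every $1\le t\le r-1$, contradicting (2).

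The conceptual crux is the reduction in the second paragraph, turning the representation-theoretic definition into the union-closed separating family $\mathcal S$; once that is in place the three implications are short. The point demanding the most care is checking that the counting in (2)$\Rightarrow$(1) is valid across the whole range $1\le t\le r-1$, so that $\binom{r-2}{t-1}$ indeed counts the excluded size-$t$ sets, and confirming that the bad pair $(i,j)$ produced by the failure of almost monomiality is precisely the combinatorial obstruction removing those sets.
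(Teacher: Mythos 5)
Your proposal is correct and follows essentially the same route as the paper: the same cycle $(1)\Rightarrow(3)\Rightarrow(2)\Rightarrow(1)$, the same union of separating monomial supports to realize each set of size $r-1$, and the same count of $\binom{r-2}{t-1}$ forbidden $t$-sets arising from a non-separated pair. The only difference is that you state explicitly the dictionary between almost monomiality and separation by the union-closed family $\{\supp(\psi):\psi\in\eM(G)\}$, a step the paper uses implicitly when it passes from the definition to the assumption on all of $\eM(G)$.
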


\begin{proof}
$(1)\Rightarrow (3)$ We fix $1\leq i\leq r$. Since $G$ is almost monomial, for any $1\leq j\leq r$ with $j\neq i$, there exists 
a subgroup $H_j\leqslant G$ and a linear character $\lambda_j$ of $H_j$ such that $\chi_j\in\supp(\lambda_j^G)$ and 
$\chi_i\notin\supp(\lambda_j^G)$.
Let $\psi:=\sum_{j\neq i}\lambda_j^G$. We have that $\supp(\psi)=\{\chi_1,\ldots,\chi_r\}\setminus\{\chi_i\}$.
Since $i$ was arbitrary chosen, it follows that $L_{r-1}(G)=r$. 

$(3)\Rightarrow (2)$ It is obvious, as $N_{r,r-1}=r$.

$(2)\Rightarrow (1)$ We fix $1\leq t\leq r-1$ such that $L_t(G)\geq N_{r,t}$. Assume, by contradiction,
that there exists $i\neq j$ such that for any $\psi\in \eM(G)$, 
$\supp(\psi)\cap \{\chi_i,\chi_j\}\neq \{\chi_i\}$. 

We choose a subset $A\subseteq \{\chi_1,\ldots,\chi_r\}$ 
with $t$ elements such that $\chi_i\in A$ and $\chi_j\notin A$. It follows that
$A\setminus \{\chi_i\}$ is a subset with $t-1$ elements in $\{\chi_1,\ldots,\chi_r\}\setminus \{\chi_i,\chi_j\}$,
hence $A$ can be chosen in $\binom{r-2}{t-1}$ ways. 
Therefore $L_t(G)\leq \binom{r}{t} - \binom{r-2}{t-1} = N_t-1$, a contradiction.
\end{proof}

\section{Normal subgroups of almost monomial groups}

Let $G$ be a finite group and $N\unlhd G$ a normal subgroup. In general, if $G$ is monomial, then $N$ is not necessarily monomial,
an example being provided independently by E. C. Dade \cite{dade} and R. van der Waall \cite{waall}, with order $2^9\cdot 7$. 
A similar fact is true in the almost monomial case. For example, the group $\SS_6$ is almost monomial, but $N:=\AA_6\unlhd \SS_6$ is not almost monomial.

On the other hand, if $N\unlhd G$ is a normal Hall subgroup, that is $(|G:N|,|N|)=1$, Dornhoff \cite{dorn} proved that if $G$ is monomial 
then $N$ is also monomial. It is natural to ask the following question: If $G$ is almost monomial and $N\unlhd G$ is Hall, is then $N$ almost 
monomial? Our computer experiments in GAP \cite{gap} show that this statement is false, in general; see Section $6$.
In order to construct such examples, we search for a group $N$ of odd order, that is not almost monomial, and we extend it to a group $G$ with 
$[G:N]=2$, which might be almost monomial.
%then it may happen that $G$ is not almost monomial, but there are also examples where $G$ is almost monomial. 
We consider the following list of finite groups from the Small Groups library of GAP \cite{gap}, 
with the property that they have odd orders and are not normally monomial, see \cite[Page 104]{punjab}:
\begin{align*}
& \text{SmallGroup(375,2)},\; \text{SmallGroup(1029,12)},\; \text{SmallGroup(1053,51)},\; \text{SmallGroup(1125,3)},\\
& \text{SmallGroup(1125,7)},\; \text{SmallGroup(1215,68)}, \;\text{SmallGroup(1875,18)},\; \text{SmallGroup(1875,19)}, 
\end{align*}
All the groups in the above list are not almost monomial, with the exceptions of the second and the third, which are monomial.
We choose $N$ to be one of the following groups:  SmallGroup(375,2), SmallGroup(1125,3), SmallGroup(1215,68), SmallGroup(1875,18), SmallGroup(1875,19).
Then, we can find $G:=N \rtimes \CC_2$ a non-trivial semidirect product of $N$ with $\CC_2=$ the cyclic group of order $2$, such that $G$ is almost monomial. 
%See also, Section $6$.
%Note that, if $N=$SmallGroup(1125,7), then $G:=N \rtimes C_2$ is not almost monomial. See also section $8$.

In the following, given $G$ an almost monomial group and $N\unlhd G$ a normal subgroup, we will 
give a sufficient condition for $N$ to be almost monomial. But first, we need two lemmas:

\begin{lema}(\cite[Problem (5.2)]{isaac})\label{41}
If $N \unlhd G$, $H\leqslant G$ and $\varphi$ is a character of $H$, 
then $$(\varphi^{NH})_N = (\varphi_{N\cap H})^N.$$
\end{lema}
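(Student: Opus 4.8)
The plan is to prove this as the single-double-coset case of Mackey's restriction formula, but since only one double coset occurs I can avoid invoking Mackey and argue directly from the definition of the induced character. First I would record the standing structural facts: because $N\unlhd G$, the product $NH$ is a subgroup of $G$ containing both $N$ and $H$, with $N\unlhd NH$ and $N\cap H\leqslant H$; moreover every element of $NH$ can be written as $nh$ with $n\in N$, $h\in H$, and each such element admits exactly $|N\cap H|$ representations of this form. The decisive observation is that $N\backslash NH/H$ consists of the single double coset $NH$, which is exactly why the right-hand side carries no summation over coset representatives.

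For the direct argument I would write out both sides using the induction formula $\theta^L(g)=\frac{1}{|K|}\sum_{x\in L}\theta^0(xgx^{-1})$, where $\theta^0$ denotes extension by zero. Fix $g\in N$. On the left, $(\varphi^{NH})_N(g)=\frac{1}{|H|}\sum_{x\in NH}\varphi^0(xgx^{-1})$. The key point is normality: for $x\in NH$ and $g\in N$ we have $xgx^{-1}\in N$, so $\varphi^0(xgx^{-1})$ is nonzero only when $xgx^{-1}\in N\cap H$, and in that case its value is $\varphi(xgx^{-1})=(\varphi_{N\cap H})(xgx^{-1})$. Hence I may replace $\varphi^0$ throughout the left-hand sum by the zero-extension of $\varphi_{N\cap H}$.

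Next I would pass from the sum over $x\in NH$ to a double sum over $(n,h)\in N\times H$, dividing by the fiber size $|N\cap H|$ to account for the overcounting; after substituting $g_h:=hgh^{-1}\in N$, the inner sum over $n\in N$ becomes exactly $|N\cap H|\cdot(\varphi_{N\cap H})^N(g_h)$ by the definition of induction from $N\cap H$ to $N$. This collapses the left-hand side to $\frac{1}{|H|}\sum_{h\in H}(\varphi_{N\cap H})^N(hgh^{-1})$. To finish I would show that $(\varphi_{N\cap H})^N$ is $H$-invariant: since $h^{-1}(N\cap H)h=N\cap H$ and $\varphi$ is a class function on $H$, the conjugate $(\varphi_{N\cap H})^h$ equals $\varphi_{N\cap H}$, and inducing this identity up to $N$ yields $((\varphi_{N\cap H})^N)^h=(\varphi_{N\cap H})^N$. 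Therefore each summand equals $(\varphi_{N\cap H})^N(g)$ and averaging over $H$ reproduces $(\varphi_{N\cap H})^N(g)$, which is the right-hand side.

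The main obstacle is the bookkeeping in the middle step: correctly accounting for the $|N\cap H|$-to-one reindexing of $NH$ by $N\times H$, and verifying that the inner conjugation sum is genuinely an induced character rather than something off by a normalizing constant. The $H$-invariance at the end is conceptually the crux, but it reduces to the elementary fact that $\varphi$ is a class function on $H$, so once the reindexing is handled carefully the remainder is routine.
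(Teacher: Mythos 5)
Your argument is correct. Note that the paper offers no proof of this lemma at all: it is quoted verbatim from Isaacs (Problem (5.2) of \emph{Character Theory of Finite Groups}) and used as a black box, so there is no in-paper proof to compare against. Your direct computation is sound: the reduction of $\varphi^0(xgx^{-1})$ to the zero-extension of $\varphi_{N\cap H}$ uses only $N\unlhd NH$, the $|N\cap H|$-to-one reindexing of $NH$ by $N\times H$ is accounted for with the correct normalizing factor (the inner sum over $n$ yields $|N\cap H|\cdot(\varphi_{N\cap H})^N(hgh^{-1})$, which cancels the extra $|N\cap H|$ in the denominator), and the final $H$-invariance of $(\varphi_{N\cap H})^N$ follows, as you say, from $h$ normalizing both $N$ and $N\cap H$ together with $\varphi$ being a class function on $H$. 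This is indeed the one-double-coset instance of Mackey's restriction formula, and unwinding it by hand as you do is a perfectly legitimate, self-contained alternative to citing Mackey or Isaacs; the only thing the citation buys is brevity.
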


\begin{lema}(\cite[Corollary 1.16]{isaac2})\label{42}
Let $N \unlhd G$, and suppose that $\chi \in \Irr(G)$ has degree relatively prime to $|G : N|$. Then $\chi_N$ is irreducible.
\end{lema}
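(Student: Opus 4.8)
The plan is to combine Clifford's theorem with the standard divisibility of character degrees across a normal subgroup, and then to exploit the coprimality hypothesis. Let $\theta\in\Irr(N)$ be an irreducible constituent of $\chi_N$. By Clifford's theorem, $\chi_N=e(\theta_1+\cdots+\theta_t)$, where $\theta=\theta_1,\dots,\theta_t$ are the distinct $G$-conjugates of $\theta$, all of degree $\theta(1)$, and $e=\langle\chi_N,\theta\rangle$ is their common multiplicity. Hence $\chi(1)=e\,t\,\theta(1)$, and I must show that the integer $\chi(1)/\theta(1)=e\,t$ equals $1$.

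First I would treat the number of conjugates: by the orbit--stabiliser relation, $t=|G:\mathrm{I}_G(\theta)|$, where $\mathrm{I}_G(\theta)=\{g\in G:\theta^g=\theta\}$ is the inertia group of $\theta$, and this index divides $|G:N|$. For the multiplicity $e$, I would pass to the Clifford correspondent $\psi\in\Irr(\mathrm{I}_G(\theta))$ of $\chi$; it lies over the now $\mathrm{I}_G(\theta)$-invariant $\theta$ and satisfies $\psi_N=e\theta$, and the invariant-case bound gives $e\mid|\mathrm{I}_G(\theta):N|$. Multiplying the two divisibilities,
$$e\,t\;\Big|\;|G:\mathrm{I}_G(\theta)|\cdot|\mathrm{I}_G(\theta):N|=|G:N|.$$
Thus $\chi(1)/\theta(1)=e\,t$ divides $|G:N|$. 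Since $e\,t$ also divides $\chi(1)$, and $\gcd(\chi(1),|G:N|)=1$ by hypothesis, it follows that $e\,t=1$; therefore $e=t=1$ and $\chi_N=\theta$ is irreducible.

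The routine ingredients here are Clifford's decomposition and the closing arithmetic with the coprimality assumption. The one genuinely nontrivial input, and the step I expect to be the main obstacle, is the divisibility $e\mid|\mathrm{I}_G(\theta):N|$ for an invariant irreducible character $\theta$. To secure it I would invoke the theory of character triples: the triple $(\mathrm{I}_G(\theta),N,\theta)$ is isomorphic to a triple $(\Gamma,Z,\lambda)$ with $Z$ central in $\Gamma$ and $\lambda$ linear, under which the ratio $e=\psi(1)/\theta(1)$ becomes an ordinary irreducible character degree of $\Gamma$ lying over $\lambda$, and the claim reduces to the classical fact that an irreducible character degree divides the index of a central subgroup. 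As the statement is precisely \cite[Corollary 1.16]{isaac2}, in the paper it is enough to cite it; the sketch above records how it may be derived from first principles.
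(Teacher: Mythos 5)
Your argument is correct. The paper offers no proof of this lemma at all --- it is stated purely as a citation of \cite[Corollary 1.16]{isaac2} --- and your derivation is exactly the standard one behind that reference: Clifford's theorem gives $\chi(1)/\theta(1)=et$, the orbit count gives $t=|G:\mathrm{I}_G(\theta)|$ dividing $|G:N|$, the invariant-case divisibility $e\mid|\mathrm{I}_G(\theta):N|$ (the genuinely nontrivial input, correctly attributed to character triples / projective representations, cf.\ Theorem 11.29 of Isaacs' \emph{Character Theory of Finite Groups}) gives $et\mid|G:N|$, and coprimality with $\chi(1)=et\,\theta(1)$ forces $et=1$. Nothing is missing; you have simply unpacked the cited corollary rather than citing it, and you correctly flag which step carries the real weight.
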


\begin{teor}\label{43}
Let $N \unlhd G$ be a normal subgroup of $G$. Assume that:
\begin{enumerate}
 \item[(i)] Every irreducible character $\chi$ of $G$ restricts irreducibly to $N$.
 \item[(ii)] If $H\leqslant G$ is a subgroup, $\lambda$ is a linear character of $H$ and $\phi$ is a irreducible character of $G$ such that 
$\langle \lambda^G, \phi \rangle = 0$, then $\langle \lambda^{NH}, (\phi_N)^{NH} \rangle = 0$.
%has the degree relatively prime with $[G:N]$
\end{enumerate}
If $G$ is almost monomial, then $N$ is almost monomial.
\end{teor}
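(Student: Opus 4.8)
The plan is to descend a separating monomial character from $G$ down to $N$ along the chain $H\leqslant NH\leqslant G$, using hypothesis (i) to transport irreducible characters and hypothesis (ii) together with Frobenius reciprocity to suppress the unwanted constituent. First I would record a consequence of (i): the restriction map $\Irr(G)\to\Irr(N)$, $\chi\mapsto\chi_N$, is surjective. Indeed, given $\alpha\in\Irr(N)$, the induced character $\alpha^G$ has some irreducible constituent $\chi$, and by Frobenius reciprocity $\langle\alpha,\chi_N\rangle=\langle\alpha^G,\chi\rangle>0$; since $\chi_N$ is irreducible by (i), this forces $\chi_N=\alpha$. Now fix distinct $\alpha\neq\beta\in\Irr(N)$ and choose $\chi,\phi\in\Irr(G)$ with $\chi_N=\alpha$ and $\phi_N=\beta$; as $\alpha\neq\beta$ we have $\chi\neq\phi$. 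Since $G$ is almost monomial, there exist $H\leqslant G$ and a linear character $\lambda$ of $H$ with $\chi\in\supp(\lambda^G)$ and $\phi\notin\supp(\lambda^G)$.

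The natural candidate downstairs is $K:=N\cap H\leqslant N$ together with the linear character $\mu:=\lambda_{N\cap H}$ of $K$. By Lemma \ref{41} applied to $\lambda$ we have $\mu^N=(\lambda_{N\cap H})^N=(\lambda^{NH})_N$, so everything reduces to understanding the constituents of $(\lambda^{NH})_N$. For the forbidden character, Frobenius reciprocity gives $\langle\mu^N,\beta\rangle=\langle(\lambda^{NH})_N,\beta\rangle=\langle\lambda^{NH},\beta^{NH}\rangle$; since $\langle\lambda^G,\phi\rangle=0$ and $\beta=\phi_N$, hypothesis (ii) yields $\langle\lambda^{NH},\beta^{NH}\rangle=0$, whence $\beta\notin\supp(\mu^N)$.

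It remains to check $\alpha\in\supp(\mu^N)$, which is the delicate point. By transitivity of induction ($\lambda^G=(\lambda^{NH})^G$) and Frobenius reciprocity, $\langle\lambda^{NH},\chi_{NH}\rangle=\langle\lambda^G,\chi\rangle>0$, so $\lambda^{NH}$ and $\chi_{NH}$ share an irreducible constituent $\psi\in\Irr(NH)$. The key observation is that $(\chi_{NH})_N=\chi_N=\alpha$ is irreducible, so $\psi_N$ is a constituent of $\alpha$ and hence a positive multiple of $\alpha$; equivalently $\langle\alpha,\psi_N\rangle>0$, i.e.\ $\psi$ is a constituent of $\alpha^{NH}$ by Frobenius reciprocity. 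As $\psi$ then occurs with positive multiplicity in both $\lambda^{NH}$ and $\alpha^{NH}$, I obtain $\langle\mu^N,\alpha\rangle=\langle\lambda^{NH},\alpha^{NH}\rangle>0$, which completes the separation. The hard part is exactly this positivity step: while the vanishing of the $\beta$-multiplicity is essentially built into hypothesis (ii), transferring a strictly positive multiplicity from level $G$ down to level $N$ requires the Clifford-theoretic fact that every irreducible constituent of $\chi_{NH}$ lies over $\alpha$, and this is precisely where the irreducibility of $\chi_N$ from hypothesis (i) is used.
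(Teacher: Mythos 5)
Your proof is correct and takes essentially the same route as the paper's: both descend via the subgroup $N\cap H$ with the linear character $\lambda_{N\cap H}$, use Lemma \ref{41} and Frobenius reciprocity to identify $(\lambda_{N\cap H})^N$ with $(\lambda^{NH})_N$, obtain the vanishing at $\beta=\phi_N$ from hypothesis (ii), and derive the positivity at $\alpha=\chi_N$ from the fact that irreducibility of $\chi_N$ forces every constituent of $\chi_{NH}$ to lie over $\alpha$. The only (cosmetic) difference is that you extract an explicit common constituent $\psi$ of $\lambda^{NH}$ and $\alpha^{NH}$, whereas the paper states the inequality $\langle\lambda^{HN},\theta^{HN}\rangle\geq\langle\lambda^{HN},\chi_{HN}\rangle$ directly.
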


\begin{proof}
Let $\theta\neq \eta\in \Irr(N)$, $\chi\in\Cons(\theta^G)$ and $\phi\in \Cons(\eta^G)$. By $(i)$, we may assume that $\chi\neq \phi$.
Since $G$ is almost monomial, there exists a subgroup $H\leqslant G$ and a linear character $\lambda$ of $H$ such that $\langle \lambda^G,\chi \rangle \neq 0$
and $\langle \lambda^G,\phi \rangle = 0$. We consider the subgroup $H\cap N\leqslant N$ and its linear character $\lambda_{H\cap N}$.
%     $\langle (\lambda^{HN})_N, \theta \rangle =
%       \langle \lambda^{HN}, \theta^{HN} \rangle$,
%      but this is equal to
%      $\langle \lambda^{HN}, \rho_{HN/N} \chi_{HN} \rangle$ not
%      $\langle \lambda^{HN}, \chi_{HN} \rangle$.
By Frobenius reciprocity and Lemma \ref{41} it follows that
$$ \langle (\lambda_{H\cap N})^N, \theta \rangle = \langle (\lambda^{HN})_N, \theta \rangle =  
\langle \lambda^{HN}, \theta^{HN} \rangle \geq \langle \lambda^{HN}, \chi_{HN} \rangle =  \langle (\lambda^{HN})^G, \chi \rangle = \langle \lambda^G, \chi \rangle > 0.$$
Since $\langle \lambda^G, \phi \rangle = 0$, by $(ii)$ and Lemma \ref{41}, it follows that 
$$0=\langle \lambda^{NH}, (\phi_N)^{NH} \rangle = \langle (\lambda^{NH})_N, \eta \rangle = \langle (\lambda_{H\cap N})^N, \eta \rangle.$$ 
Thus $N$ is almost monomial.
%$\phi\in \Cons(\eta^G)$. We may assume $\chi\neq \phi$
\end{proof}

\begin{obs}
\emph{
According to Lemma \ref{42}, the condition $(i)$ of Theorem \ref{43} is implied by the assertion: 
Every irreducible character $\chi$ of $G$ has degree relatively prime with $[G:N]$. The condition $(ii)$ is more technical,
but it is necessary in the proof.}
\end{obs}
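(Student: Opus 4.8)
The plan is to apply Lemma \ref{42} uniformly across $\Irr(G)$, since the hypothesis of the Remark is exactly the numerical input that Lemma \ref{42} demands, asserted simultaneously for every irreducible character. Concretely, I would assume that $\chi(1)$ is relatively prime to $[G:N]=|G:N|$ for every $\chi\in\Irr(G)$, and then fix an arbitrary $\chi\in\Irr(G)$. Because $N\unlhd G$ is normal and $\gcd(\chi(1),|G:N|)=1$, Lemma \ref{42} applies verbatim and yields that the restriction $\chi_N$ is irreducible. As the chosen $\chi$ was arbitrary, this shows that \emph{every} irreducible character of $G$ restricts irreducibly to $N$, which is precisely condition $(i)$ of Theorem \ref{43}. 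Hence the stated degree condition implies $(i)$, as claimed. The only logical move is to pass from the ``for a single $\chi$'' statement of Lemma \ref{42} to the ``for all $\chi$'' quantification present in the hypothesis.

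There is essentially no obstacle here: the argument is a direct instantiation of Lemma \ref{42}, and the only points worth checking are bookkeeping ones, namely that $N$ is normal (assumed throughout the setting of Theorem \ref{43}) and that the notation $[G:N]$ is read as the index $|G:N|$ occurring in Lemma \ref{42}. It is nonetheless worth emphasizing what the Remark buys in practice. Condition $(i)$ of Theorem \ref{43} is a statement about restrictions of characters, which one would normally verify representation by representation, whereas the hypothesis of the Remark is a purely arithmetic condition on the degrees $\chi(1)$ and the index $|G:N|$, readable straight off the character table. Thus the Remark trades a restriction computation for an arithmetic one, giving an easily checkable sufficient condition under which Theorem \ref{43} can be invoked. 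I would not expect the converse to hold in general, so the degree condition should be regarded as a convenient strengthening of $(i)$ rather than an equivalent reformulation of it.
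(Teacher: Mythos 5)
Your argument is correct and is exactly the one the paper intends: the Remark is stated without proof precisely because it is the verbatim instantiation of Lemma \ref{42} at each $\chi\in\Irr(G)$, quantified over all irreducible characters. Nothing further is needed.
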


\section{Relative almost monomial groups}

Let $G$ be a finite group and $N\unlhd G$ a normal subgroup. We recall that the group $G$ is called \emph{relative monomial} with respect to $N$, if for any $\chi\in \Irr(G)$,
there exists a subgroup $H\leqslant G$ with $N\subseteq H$ and an irreducible character $\psi$ of $H$ such that $\psi_N$ is irreducible and $\psi^G=\chi$; see \cite[Definition 6.21]{isaac}.
We introduce the following similar definition, in the framework of almost monomial groups:

\begin{dfn}\label{51}
Let $N \unlhd G$ be a normal subgroup of $G$. We say that $G$ is \emph{relative almost monomial} with respect to $N$, if for any 
$\chi\neq \phi \in \Irr(G)$ there exists a subgroup $H\leqslant G$ with $N\subseteq H\subseteq G$ and $\psi\in \Irr(H)$ such that 
$\psi_N \in \Irr(N)$, $\chi\in\supp(\psi^G)$ and $\phi\notin \supp(\psi^G)$.
\end{dfn}

%In the section $8$ we provide a function in GAP \cite{gap} which
%determines if a group is relative almost monomial with respect to a normal subgroup.

\begin{prop}\label{52}
Let $G$ be a group and $N \unlhd G$ a normal subgroup. We have that:
\begin{enumerate}
 \item[(1)] $G$ is almost monomial if and only if $G$ is relative almost monomial with respect to the 
trivial subgroup $\{1\} \unlhd G$.
 \item[(2)] If $G$ is relative almost monomial with respect to $N$, then $G/N$ is almost monomial.
\end{enumerate}
\end{prop}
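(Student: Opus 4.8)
The plan is to treat the two parts separately: part (1) follows essentially by unwinding Definition \ref{51} in the special case $N=\{1\}$, while part (2) requires transporting the separating character from $G$ down to $G/N$.

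For part (1), I would observe that when $N=\{1\}$ the only irreducible character of $N$ is the trivial one, of degree $1$; since restriction preserves degree, the condition $\psi_{\{1\}}\in\Irr(\{1\})$ holds if and only if $\psi(1)=1$, that is, if and only if $\psi$ is a linear character of $H$. The containment $N\subseteq H$ is then automatic for every subgroup $H$, so Definition \ref{51} specialized to $N=\{1\}$ reads verbatim as the defining property of an almost monomial group. Hence both implications are immediate.

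For part (2), I would start from two distinct $\bar\chi\neq\bar\phi\in\Irr(G/N)$ and lift them to $\chi,\phi\in\Irr(G)$ with $N\subseteq\ker\chi$ and $N\subseteq\ker\phi$, via the standard bijection between $\Irr(G/N)$ and the irreducible characters of $G$ containing $N$ in their kernel. Applying the relative almost monomial hypothesis to the pair $\chi\neq\phi$ produces a subgroup $H$ with $N\subseteq H\subseteq G$ and $\psi\in\Irr(H)$ such that $\psi_N\in\Irr(N)$, $\chi\in\supp(\psi^G)$ and $\phi\notin\supp(\psi^G)$. The crucial step is to show that $\psi$ is in fact \emph{linear} with $N\subseteq\ker\psi$: since $\chi\in\supp(\psi^G)$, Frobenius reciprocity gives $\langle\chi_H,\psi\rangle>0$, so writing $\chi_H=\psi+\Delta$ with $\Delta$ a character and restricting to $N$ shows that $\psi_N$ is a sub-character of $\chi_N$; but $N\subseteq\ker\chi$ forces $\chi_N=\chi(1)\,1_N$, whence $\psi_N$ is a multiple of $1_N$. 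Combined with the irreducibility of $\psi_N$, this yields $\psi_N=1_N$, so $\psi(1)=1$ and $N\subseteq\ker\psi$.

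Once $\psi$ is known to be a linear character of $H$ trivial on $N$, it descends to a linear character $\bar\psi$ of $\bar H:=H/N\leqslant G/N$. I would then invoke the fact that induction commutes with passage to the quotient: because $N\subseteq\ker\psi$, a direct computation of $\psi^G$ on $N$ shows $N\subseteq\ker(\psi^G)$, and the character of $G/N$ induced by $\bar\psi$ is precisely the descent of $\psi^G$. Under the resulting bijection between the constituents of $\psi^G$ and those of $\bar\psi^{G/N}$, the conditions $\chi\in\supp(\psi^G)$ and $\phi\notin\supp(\psi^G)$ translate into $\bar\chi\in\supp(\bar\psi^{G/N})$ and $\bar\phi\notin\supp(\bar\psi^{G/N})$, exhibiting a linear character separating $\bar\chi$ from $\bar\phi$ and proving $G/N$ almost monomial. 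The main obstacle is the crucial step above: one must argue carefully that the irreducibility of $\psi_N$ together with $N\subseteq\ker\chi$ collapses $\psi$ all the way down to a linear character, since otherwise the descended $\bar\psi$ would only be irreducible and would not suffice for the (linear) definition of almost monomiality.
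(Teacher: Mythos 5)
Your proposal is correct and follows essentially the same route as the paper: part (1) by observing that $\psi_{\{1\}}\in\Irr(\{1\})$ is equivalent to $\psi$ being linear, and part (2) by lifting the pair to $G$, extracting $H$ and $\psi$ from the relative hypothesis, and forcing $\psi_N=1_N$ (hence $\psi$ linear and trivial on $N$) from $\langle\chi_H,\psi\rangle>0$ together with $\chi_N=\chi(1)\,1_N$ and the irreducibility of $\psi_N$. The only difference is cosmetic: you spell out the compatibility of induction with passage to the quotient, which the paper leaves implicit.
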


\begin{proof}
(1) Let $\psi\in \Irr(H)$. It is enough to note that $\psi_{\{1\}}$ is irreducible if and only if $\psi(1)=1$, that is, $\psi$ is linear.

(2) Let $\tilde \chi\neq \tilde \phi$ be two irreducible characters of $G/N$ and let $\chi$ and $\phi$ be their corresponding characters of $G$.
Obviously, $N\subseteq \Ker(\chi)$ and $N\subset\Ker(\phi)$. Since $G$ is a relative almost monomial with respect to $N$, it follows that there exists
a subgroup $H\leqslant G$ with $N\subseteq H\subseteq G$ and $\psi\in \Irr(H)$ such that $\psi_N \in \Irr(N)$, 
$\chi\in\supp(\psi^G)$ and $\phi\notin \supp(\psi^G)$.

We claim that $N\subseteq \Ker(\psi)$. Indeed, as
    $$0 < \langle \chi, \psi^G \rangle = \langle \chi_H, \psi \rangle
       \leq \langle \chi_H, (\psi_N)^H \rangle
       = \langle \chi_N, \psi_N \rangle,$$
    and since $\chi_N$ is a multiple of $1_N$ and $\psi_N \in \Irr(N)$,
    we get $\psi_N = 1_N$.
% Indeed, if this is not the case, then $\psi_N$ is an irreducible character of $N$ different 
% from the trivial character $1_N$ of $N$. Since $N\subseteq \Ker(\chi)$, it follows that $\chi_N=\chi(1)1_N$, and therefore
% $$\langle \psi_H, (\chi_N)^H \rangle = \langle \psi_N, \chi_N \rangle = \chi(1) \langle 1_N, \chi_N \rangle = 0.$$
% Hence, as $\psi\in \Supp((\psi_N)^H)$, it follows that  
% $\langle \psi, \chi_H \rangle = \langle \psi^G, \chi \rangle = 0,$
% a contradiction. Thus $N\subseteq \Ker(\psi)$.
% Since $N\subseteq \Ker(\psi)$, it follows that $\psi_N = \psi(1)1_N$. 
Therefore, $\psi$ is linear. Considering the corresponding linear character $\tilde\psi$ of $H/N$, the definition of almost monomial groups is satisfied by $G/N$.
\end{proof}

According to Theorem \ref{22}(2), if $G$ is almost monomial and $N\unlhd G$, then $G/N$ is almost monomial. The converse
is not true, even when $N$ and $G/N$ are both almost monomial. For example, $G:=\GL_2(\mathbb F_3)$
is not almost monomial, but $N:=\SL_2(\mathbb F_3)$ and $G/N\cong \CC_2$ are. %The following result, which is similar to \cite[Theorem 6.23]{isaac}, 
%shows that, in some special conditions, $G/N$ almost monomial could imply that $G$ is almost monomial:

\begin{teor}\label{53}
 Let $N \unlhd G$ be a normal subgroup of $G$ and assume that all Sylow subgroups of $N$ are abelian. Assume that $G$ is solvable
and is a relative almost monomial group w.r.t. $N$. Then $G$ is almost monomial.
\end{teor}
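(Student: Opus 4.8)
The plan is to reduce relative almost monomiality with respect to $N$ to ordinary almost monomiality by showing that, under the stated hypotheses, the auxiliary irreducible character $\psi\in\Irr(H)$ whose restriction to $N$ is irreducible can itself be replaced by (or constructed from) a linear character of some subgroup. The key structural input should be a monomiality-type statement for the pieces living over $N$: since all Sylow subgroups of $N$ are abelian and $G$ is solvable, I expect to invoke a theorem guaranteeing that characters of $N$ (or of subgroups containing $N$) that arise here are induced from linear characters. The natural candidate is the classical result that a solvable group all of whose Sylow subgroups are abelian is an $M$-group (an $A$-group is monomial), or more precisely the relative-monomiality machinery of Isaacs \cite[Ch.\ 6]{isaac} adapted to this setting.

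First I would take $\chi\neq\phi\in\Irr(G)$ and use the hypothesis that $G$ is relative almost monomial w.r.t.\ $N$ to obtain $N\subseteq H\leqslant G$ and $\psi\in\Irr(H)$ with $\psi_N\in\Irr(N)$, $\chi\in\Cons(\psi^G)$, $\phi\notin\Cons(\psi^G)$. Next I would analyze $\psi$ via Clifford theory relative to $N\unlhd H$: because $\psi_N$ is irreducible, $\psi$ is a so-called relative $M$-character candidate, and I would try to show $\psi$ is monomial, i.e.\ $\psi=\mu^H$ for a linear $\mu$ on some $K\leqslant H$. The abelianity of the Sylow subgroups of $N$ feeds in here: it forces $N$ to be an $A$-group hence monomial, and combined with solvability of $H$ this should let me write the irreducible $\psi_N$ as $\nu^N$ for a linear $\nu$, then lift and induce through the chain $K\leqslant H\leqslant G$ to realize a linear character $\mu$ of $K$ with the desired separation properties on $\chi$ and $\phi$.

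Once $\psi=\mu^H$ with $\mu$ linear, I would set $\lambda:=\mu$ on $K\leqslant G$ and check that induction is transitive, so $\lambda^G=(\mu^H)^G=\psi^G$; then $\chi\in\Cons(\lambda^G)$ and $\phi\notin\Cons(\lambda^G)$ follow immediately, which is exactly the defining condition for $G$ to be almost monomial. The bookkeeping of constituents is routine given transitivity of induction and Frobenius reciprocity (used already in the proof of Theorem \ref{43}), so no serious calculation is needed there.

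The hard part will be the middle step: establishing that the relative character $\psi$ is genuinely monomial. Knowing only that $\psi_N$ is irreducible does not by itself give a linear source, so I must use solvability of $G$ together with the abelian-Sylow hypothesis on $N$ to run an induction — most plausibly on $|G:N|$ or on $|G|$ — in which the inductive step produces a proper subgroup over which $\psi$ (or a Clifford correspondent of it) becomes monomial. Controlling the interaction between the relative setup $N\subseteq H$ and the global separation of $\chi$ from $\phi$ throughout this induction is where the real work lies; I would expect to need Lemma \ref{41} to transport constituent information between $H$, $NH=H$, and $G$, and possibly Lemma \ref{42} to secure irreducibility of restrictions along the way.
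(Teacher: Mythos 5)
Your proposal takes essentially the same route as the paper: obtain $H\supseteq N$ and $\psi\in\Irr(H)$ with $\psi_N$ irreducible separating $\chi$ from $\phi$, show that $\psi$ is monomial using solvability and the abelian Sylow hypothesis on $N$ (the paper cites the proof of \cite[Theorem 6.23]{isaac} for exactly this: a minimal subgroup $U\leqslant H$ from which $\psi$ is induced must carry a linear character $\theta$), and finish by transitivity of induction. The \emph{hard part} you flag is precisely the content of that cited argument, and the interaction you worry about in your last paragraph is a non-issue, since $\theta^G=(\theta^H)^G=\psi^G$ makes the separation of $\chi$ from $\phi$ transfer automatically.
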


\begin{proof}
Let $\chi\neq \phi\in\Irr(G)$ and choose $H\leqslant G$ with $N\subseteq H\subseteq G$, $\psi\in \Irr(H)$ such that $\psi_N \in \Irr(N)$, 
$\chi\in\supp(\psi^G)$ and $\phi\notin \supp(\psi^G)$. We choose a subgroup $U\subseteq H$ minimal such that
there exists $\theta\in \Irr(U)$ with $\theta^H = \psi$. From the proof of \cite[Theorem 6.23]{isaac}, 
it follows that $\theta\in \Lin(U)$. Since $\theta^G = (\theta^H)^G = \psi^G$, we get the required conclusion.
\end{proof}

We recall the following well known fact:

\begin{lema}\label{54}(\cite[Theorem 4.21]{isaac})
 Let $G=H\times K$. Then the characters $\varphi\times\theta$ for $\varphi\in \Irr(H)$ and $\theta\in\Irr(K)$ are
exactly the irreducible characters of $G$.
\end{lema}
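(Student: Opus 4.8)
The plan is to verify the three ingredients that together constitute the statement: that each external product $\varphi\times\theta$ is a genuine character of $G=H\times K$, that it is irreducible, and that as $\varphi$ ranges over $\Irr(H)$ and $\theta$ over $\Irr(K)$ these exhaust $\Irr(G)$ without repetition. Throughout, $\varphi\times\theta$ denotes the class function on $G$ defined by $(\varphi\times\theta)(h,k)=\varphi(h)\theta(k)$.

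First I would check that each $\varphi\times\theta$ is an honest character. If $\varphi$ is afforded by a representation $\rho$ of $H$ and $\theta$ by a representation $\sigma$ of $K$, then $(h,k)\mapsto \rho(h)\otimes\sigma(k)$ is a representation of $H\times K$, and its trace at $(h,k)$ equals $\operatorname{tr}\rho(h)\cdot\operatorname{tr}\sigma(k)=\varphi(h)\theta(k)$. Hence $\varphi\times\theta\in\Ch(G)$.

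The core computation is the inner product, and this is where the direct-product structure does all the work. Using $|G|=|H|\,|K|$ and the fact that a sum over $G=H\times K$ splits as a double sum over $H$ and $K$, I would establish
$$\langle \varphi\times\theta,\ \varphi'\times\theta'\rangle_G = \langle\varphi,\varphi'\rangle_H\cdot\langle\theta,\theta'\rangle_K.$$
Specializing to $\varphi=\varphi'$ and $\theta=\theta'$ gives norm $1$, so each $\varphi\times\theta$ is irreducible; applying it to distinct pairs $(\varphi,\theta)\neq(\varphi',\theta')$ gives orthogonality, so the characters $\varphi\times\theta$ are pairwise distinct members of $\Irr(G)$.

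Finally, to see that these are all of $\Irr(G)$, I would count. The products $\varphi\times\theta$ number $|\Irr(H)|\cdot|\Irr(K)|$. I would then invoke the equality between the number of irreducible characters and the number of conjugacy classes of a finite group, together with the elementary fact that the conjugacy classes of $H\times K$ are exactly the products $C_H\times C_K$ of a class of $H$ with a class of $K$; this yields $|\Irr(G)|=|\Irr(H)|\cdot|\Irr(K)|$, so the distinct irreducible characters already produced account for all of them. (Alternatively, one verifies directly that $\sum_{\varphi,\theta}\bigl(\varphi(1)\theta(1)\bigr)^2=|H|\,|K|=|G|$.) The inner-product factorization is routine once the sum over $G$ is separated; the only point requiring a little care is the bookkeeping in the conjugacy-class count that certifies completeness.
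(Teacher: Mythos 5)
Your proof is correct: the tensor-product construction, the factorization of the inner product over $G=H\times K$, and the completeness count (via conjugacy classes or the sum of squared degrees) are exactly the standard argument. The paper does not prove this lemma itself but quotes it from Isaacs (Theorem 4.21), where essentially the same reasoning is used, so your approach matches the intended one.
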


The following result generalizes Theorem \ref{22}(3); compare with \cite[Proposition 1]{kwang}.

\begin{teor}\label{55}
Let $N_1\unlhd G_1$, $N_2\unlhd G_2$. The following are equivalent:
\begin{enumerate}
 \item[(1)] $G_i$ is relative almost monomial w.r.t. $N_i$, for i=$1,2$.
 \item[(2)] $G_1\times G_2$ is relative almost monomial w.r.t. $N_1\times N_2$.
\end{enumerate}
\end{teor}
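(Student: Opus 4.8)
The plan is to prove the equivalence $(1)\Leftrightarrow(2)$ by exploiting the tensor-product structure of irreducible characters on a direct product, as recorded in Lemma \ref{54}, together with the analogous factorization of subgroups and induced characters. The crucial algebraic facts I will use throughout are: irreducible characters of $G_1\times G_2$ are exactly $\chi_1\times\chi_2$ with $\chi_i\in\Irr(G_i)$; restriction factors as $(\chi_1\times\chi_2)_{N_1\times N_2}=(\chi_1)_{N_1}\times(\chi_2)_{N_2}$, which is irreducible iff both factors are; and induction factors as $(\psi_1\times\psi_2)^{G_1\times G_2}=\psi_1^{G_1}\times\psi_2^{G_2}$ for $\psi_i$ a character of $H_i\leqslant G_i$. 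The last identity is what lets me transport the constituent conditions between the two settings.

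For the direction $(1)\Rightarrow(2)$, I would take two distinct irreducibles $\chi_1\times\chi_2\neq\phi_1\times\phi_2$ of $G_1\times G_2$. Distinctness means they differ in at least one coordinate, say $\chi_1\neq\phi_1$ (the other case is symmetric). Applying the relative almost monomial hypothesis to $G_1$ with respect to $N_1$ yields $N_1\subseteq H_1\leqslant G_1$ and $\psi_1\in\Irr(H_1)$ with $(\psi_1)_{N_1}$ irreducible, $\chi_1\in\supp(\psi_1^{G_1})$, and $\phi_1\notin\supp(\psi_1^{G_1})$. On the second coordinate I simply need a $\psi_2$ whose induction contains $\chi_2$ and whose restriction to $N_2$ is irreducible; the cleanest choice is to apply the hypothesis for $G_2$ if $\chi_2\neq\phi_2$, but even when $\chi_2=\phi_2$ I can take $H_2$ and $\psi_2$ to be any witness with $\chi_2\in\supp(\psi_2^{G_2})$ and $(\psi_2)_{N_2}\in\Irr(N_2)$ --- for instance one arising from applying the hypothesis to $\chi_2$ against any other irreducible, or, if $G_2$ is quasi-monomial in that coordinate, a suitable linear character. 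Then $H:=H_1\times H_2$ contains $N_1\times N_2$, $\psi:=\psi_1\times\psi_2$ restricts irreducibly to $N_1\times N_2$, and $\psi^{G_1\times G_2}=\psi_1^{G_1}\times\psi_2^{G_2}$ has $\supp$ equal to $\supp(\psi_1^{G_1})\times\supp(\psi_2^{G_2})$; thus $\chi_1\times\chi_2$ is a constituent while $\phi_1\times\phi_2$ is not, precisely because $\phi_1\notin\supp(\psi_1^{G_1})$ forces $\phi_1\times\phi_2$ out of the product support.

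For the converse $(2)\Rightarrow(1)$, I would fix $i=1$ (again symmetric) and distinct irreducibles $\chi_1\neq\phi_1$ of $G_1$. I embed these into $G_1\times G_2$ by tensoring with a fixed irreducible $\rho\in\Irr(G_2)$ --- conveniently $\rho=1_{G_2}$ --- obtaining $\chi_1\times\rho\neq\phi_1\times\rho$. The relative almost monomial property of $G_1\times G_2$ supplies $H\leqslant G_1\times G_2$ with $N_1\times N_2\subseteq H$, and $\psi\in\Irr(H)$ with $\psi_{N_1\times N_2}$ irreducible and the two constituent conditions. The obstacle, and the only genuinely delicate point, is that an arbitrary intermediate $H$ need not split as a direct product of subgroups of $G_1$ and $G_2$. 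I would resolve this by observing that $H$ contains $N_1\times N_2$, and then projecting: set $H_1:=\pi_1(H)$ and $\psi_1:=$ an appropriate irreducible constituent of the restriction/projection of $\psi$, checking via the product structure of $\Irr(N_1\times N_2)$ that $(\psi_1)_{N_1}$ stays irreducible. Because $\psi_{N_1\times N_2}=\alpha_1\times\alpha_2$ is irreducible with each $\alpha_i\in\Irr(N_i)$, the interaction with $N_1\times N_2\subseteq H$ pins down enough of $H$'s structure to extract the factor on the $G_1$ side; the constituent conditions on $\chi_1\times\rho$ versus $\phi_1\times\rho$ then descend to $\chi_1$ versus $\phi_1$ after tracking supports through the factorization of induction. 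I expect handling this non-split $H$ --- justifying the reduction to a product subgroup, or equivalently reducing the witness $\psi$ to its first-coordinate data without losing irreducibility of the $N_1$-restriction --- to be the main technical hurdle, whereas all support and restriction computations are routine given Lemma \ref{54}.
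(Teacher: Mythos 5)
Your direction $(1)\Rightarrow(2)$ is essentially the paper's argument. The only real difference is the treatment of the second coordinate when $\chi_2=\phi_2$: the paper's device is to take $H_2:=N_2$ and $\psi_2$ any irreducible constituent of $(\chi_2)_{N_2}$, so that $(\psi_2)_{N_2}=\psi_2$ is trivially irreducible and $\chi_2\in\Cons(\psi_2^{G_2})$ by Frobenius reciprocity; this works uniformly, whereas your alternatives are vaguer (re-applying the hypothesis to $\chi_2$ against ``any other irreducible'' presupposes $|\Irr(G_2)|\geq 2$, and the quasi-monomial aside is not available in general). Adopt that choice and this half is complete.

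Direction $(2)\Rightarrow(1)$ is where your proposal has a genuine gap, and you have named it yourself: Definition \ref{51} only supplies an intermediate subgroup $N_1\times N_2\subseteq H\subseteq G_1\times G_2$, which need not split as $H_1\times H_2$ (non-product subgroups containing $N_1\times N_2$ exist whenever suitable subquotients of $G_1/N_1$ and $G_2/N_2$ are isomorphic). Your proposed repair --- pass to $H':=\pi_1(H)\times\pi_2(H)$ and replace $\psi$ by an irreducible constituent $\psi'$ of $\psi^{H'}$ lying over $\chi$ --- does preserve the two constituent conditions, since $\Cons(\psi'^{G})\subseteq\Cons(\psi^{G})$, but it does \emph{not} obviously preserve irreducibility of the restriction to $N_1\times N_2$: Clifford's theorem only says $\psi'_{N_1\times N_2}$ is a multiple of a sum of conjugates of $\psi_{N_1\times N_2}$, and nothing forces multiplicity one or full inertia. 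So the ``main technical hurdle'' you defer is exactly the missing step, and as written the proof is incomplete. For what it is worth, the paper's own proof of this direction silently takes the witness to be of the form $H_1\times H_2$ with $\psi=\psi_1\times\psi_2$ and then argues as you intend (from $\chi_1\times 1_{G_2}\in\Cons(\psi_1^{G_1}\times\psi_2^{G_2})$ it deduces $1_{G_2}\in\Cons(\psi_2^{G_2})$, hence $\chi_1\in\Cons(\psi_1^{G_1})$ and $\phi_1\notin\Cons(\psi_1^{G_1})$); it never addresses the non-split case. You have therefore put your finger on a point the paper glosses over, but you have not closed it, so your write-up does not yet constitute a proof.
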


\begin{proof}
$(1)\Rightarrow (2)$ Let $\chi\neq \phi \in \Irr(G_1\times G_2)$. According to Lemma \ref{54}, there exists $\chi_1,\phi_1\in \Irr(G_1)$ and 
$\chi_2,\phi_2\in \Irr(G_2)$ such that $\chi=\chi_1\times\chi_2$ and $\phi=\phi_1\times\phi_2$. 
Since $\chi\neq \phi$, it follows that $\chi_1\neq\phi_1$ or $\chi_2\neq\phi_2$. 
Without any loss of generality, assume $\chi_1\neq\phi_1$.

From hypothesis (1), there exists a subgroup $H_1\leqslant G_1$ with $N_1\subseteq H_1\subseteq G_1$ and a character $\psi_1\in \Irr(H_1)$ such that $(\psi_1)_{N_1}\in \Irr(N_1)$,
$\chi_1\in\Cons(\psi_1^G)$ and $\phi_1\notin\Cons(\psi_1^G)$. We choose $\psi_2\in\Cons((\chi_2)_{N_2})$. Let $H_2:=N_2$.
From Lemma \ref{54}, it follows that $\psi:=\psi_1\times \psi_2$ is irreducible and, moreover, 
$(\psi_1 \times \psi_2)_{N_1\times N_2} = (\psi_1)_{N_1}\times (\psi_2)_{N_2}$ is an irreducible character of $N_1\times N_2$.
One can easily check that $\chi\in\Cons(\psi^G)$ and $\phi\notin\Cons(\psi^G)$.

$(2)\Rightarrow(1)$ Let $\chi_1\neq\phi_1\in\Irr(G_1)$. We consider the characters $\chi:=\chi_1\times 1_{G_2}$ and $\phi:=\phi_1\times 1_{G_2}$. From hypothesis (2),
there exist some subgroups $H_i\leqslant G_i$ with $N_i\subseteq H_i\subseteq G_i$, $i=1,2$, and a character $\psi\in \Irr(H_1\times H_2)$ such that $\psi_{N_1\times N_2}\in \Irr(N_1\times N_2)$,
$\chi\in \Cons(\psi^G)$ and $\phi\notin \Cons(\psi^G)$. Write $\psi=\psi_1\times \psi_2$. Since $\chi= \chi_1\times 1_{G_2}\in \Cons(\psi^G)$, it follows that $1_{G_2}\in \Cons(\psi_2^{G_2})$.
Therefore, $\chi_1 \in \Cons(\psi_1^{G_1})$ and $\phi_1 \notin \Cons(\psi_1^{G_1})$, hence Definition \ref{51} is fulfilled by $G_1$ and $N_1$. Analogously, $G_2$ is
relative almost monomial w.r.t. $N_2$.
\end{proof}

We recall the following consequence of Clifford's Theorem:

\begin{lema}(\cite[Corollary 6.7]{isaac})\label{lema51}
Let $H\unlhd G$ be a normal subgroup and let $\chi\in\Irr(G)$ such that $\langle \chi_H,1_H \rangle \neq 0$. Then $H\subseteq \Ker(\chi)$.
\end{lema}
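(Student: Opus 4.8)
The plan is to invoke Clifford's theorem for the normal subgroup $H\unlhd G$. The hypothesis $\langle \chi_H, 1_H\rangle\neq 0$ says precisely that the trivial character $1_H$ appears as an irreducible constituent of the restriction $\chi_H$. By Clifford's theorem, the restriction of an irreducible character to a normal subgroup is a common nonnegative integer multiple of the sum of the $G$-conjugates of any one of its constituents; applying this to the constituent $1_H$ yields $\chi_H = e\sum_{i} \theta_i$, where the $\theta_i$ range over the distinct $G$-conjugates of $1_H$ and $e=\langle\chi_H,1_H\rangle$.

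The key observation is that $G$ acts on $\Irr(H)$ by $\theta\mapsto \theta^g$ with $\theta^g(h)=\theta(g^{-1}hg)$, and this action fixes the trivial character, since $1_H^g = 1_H$ for every $g\in G$. Hence there is only one conjugate, namely $1_H$ itself, and the Clifford decomposition collapses to $\chi_H = e\cdot 1_H$. Evaluating at the identity forces $e=\chi(1)$, so that $\chi(h)=\chi(1)$ for every $h\in H$.

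It then remains to pass from $\chi(h)=\chi(1)$ for all $h\in H$ to the inclusion $H\subseteq \Ker(\chi)$. This is the standard equality case in $|\chi(h)|\leq \chi(1)$: if $\rho$ is a representation affording $\chi$, taken unitary, then $\rho(h)$ has finite order, so its eigenvalues are roots of unity; the condition $\chi(h)=\chi(1)$ means these $\chi(1)$ unit-modulus numbers sum to $\chi(1)$, which forces each of them to equal $1$ and hence $\rho(h)$ to be the identity. Thus every $h\in H$ lies in $\Ker(\chi)$, as required.

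There is no genuine obstacle here; the only point requiring care is recognizing that the trivial character is $G$-invariant, so that the Clifford decomposition has a single term. Alternatively, one can bypass Clifford entirely: if $V$ is a $\mathbb{C}G$-module affording $\chi$, then $\langle\chi_H,1_H\rangle=\dim V^H\neq 0$, and the fixed space $V^H$ is $G$-invariant because $H$ is normal (for $g\in G$, $v\in V^H$, $h\in H$ one computes $h(gv)=g(g^{-1}hg)v=gv$); irreducibility of $V$ then gives $V^H=V$, which is exactly $H\subseteq\Ker(\chi)$.
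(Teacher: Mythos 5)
Your proof is correct. The paper does not prove this lemma at all---it simply cites it as \cite[Corollary 6.7]{isaac}---and your Clifford-theoretic argument (the trivial character is $G$-invariant, so the Clifford decomposition collapses to $\chi_H=e\cdot 1_H$, whence $\chi(h)=\chi(1)$ for all $h\in H$ and $H\subseteq\Ker(\chi)$) is exactly the standard proof of that corollary in Isaacs. Your alternative argument via the $G$-invariance of the fixed space $V^H$ is also valid and arguably cleaner, since it avoids the eigenvalue equality case entirely.
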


The following result generalizes Theorem $2.2(2)$; compare with \cite[Proposition 2]{kwang}.

\begin{teor}\label{57}
 Let $N\unlhd G$ such that $G$ is a relative almost monomial w.r.t. $N$. Let $A\unlhd G$ such that $A\subseteq N$. Then
$G/A$ is a relative almost monomial w.r.t. $N/A$.
\end{teor}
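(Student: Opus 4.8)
The plan is to transfer the relative almost monomial structure of $G$ to $G/A$ through the inflation correspondence, which identifies $\Irr(G/A)$ with the set of $\chi\in\Irr(G)$ satisfying $A\subseteq\Ker(\chi)$. Given $\tilde\chi\neq\tilde\phi\in\Irr(G/A)$, I would first lift them to the associated characters $\chi\neq\phi\in\Irr(G)$, both having $A$ in their kernel. Applying the hypothesis that $G$ is relative almost monomial w.r.t.\ $N$ to the pair $\chi\neq\phi$ produces a subgroup $H$ with $N\subseteq H\subseteq G$ and an irreducible character $\psi$ of $H$ such that $\psi_N\in\Irr(N)$, $\chi\in\Cons(\psi^G)$ and $\phi\notin\Cons(\psi^G)$. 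Since $A\subseteq N\subseteq H$ and $A\unlhd G$, we have $A\unlhd H$ and $N/A\subseteq H/A\subseteq G/A$, so the candidate witness for $G/A$ will be $H/A$ together with the character obtained by deflating $\psi$.

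The crucial step is to show that $\psi$ deflates at all, i.e.\ that $A\subseteq\Ker(\psi)$. By Frobenius reciprocity, $\chi\in\Cons(\psi^G)$ gives $\langle\chi_H,\psi\rangle>0$, so $\psi$ is a constituent of $\chi_H$. Because $A\subseteq\Ker(\chi)$, the restriction $(\chi_H)_A$ is a multiple of $1_A$; since $\psi$ is a constituent of $\chi_H$, its restriction $\psi_A$ is therefore also a multiple of $1_A$, so $\langle\psi_A,1_A\rangle=\psi(1)\neq 0$. Lemma \ref{lema51}, applied to $A\unlhd H$ and $\psi\in\Irr(H)$, then yields $A\subseteq\Ker(\psi)$. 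Consequently $\psi$ descends to $\tilde\psi\in\Irr(H/A)$, and since $A\subseteq\Ker(\psi_N)$ as well, the restriction $\tilde\psi_{N/A}$ is exactly the deflation of $\psi_N$, hence irreducible because $\psi_N\in\Irr(N)$.

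It remains to match the constituent conditions across the quotient. I would invoke the standard compatibility of induction with inflation: since $A\unlhd G$ lies in $H$ and $A\subseteq\Ker(\psi)$, one checks directly from the induced-character formula that $A\subseteq\Ker(\psi^G)$ and that the deflation of $\psi^G$ equals $\tilde\psi^{G/A}$. Combined with the fact that inflation is an isometry onto its image, this gives $\langle\tilde\chi,\tilde\psi^{G/A}\rangle=\langle\chi,\psi^G\rangle>0$ and $\langle\tilde\phi,\tilde\psi^{G/A}\rangle=\langle\phi,\psi^G\rangle=0$, so $\tilde\chi\in\Cons(\tilde\psi^{G/A})$ and $\tilde\phi\notin\Cons(\tilde\psi^{G/A})$. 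This verifies Definition \ref{51} for $G/A$ and $N/A$. The main obstacle is the deflation step $A\subseteq\Ker(\psi)$: everything else is bookkeeping of the inflation correspondence, but the definition only hands us an irreducible $\psi$ of $H$ with no a priori reason to be trivial on $A$, so one must extract this from the kernel condition on $\chi$ via Clifford theory (Lemma \ref{lema51}).
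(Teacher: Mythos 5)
Your proposal is correct and follows essentially the same route as the paper: lift the pair to $\Irr(G)$, apply the hypothesis to obtain $H$ and $\psi$, deduce $A\subseteq\Ker(\psi)$ from $A\subseteq\Ker(\chi)$ together with $\langle\chi_H,\psi\rangle>0$ and Lemma \ref{lema51}, and then deflate everything to $G/A$. The only difference is expository — you spell out the compatibility of induction with inflation, which the paper leaves implicit.
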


\begin{proof}
Let $\tilde \chi\neq\tilde \phi\in \Irr(G/A)$, and let $\chi$ and $\phi$ their corresponding characters in $\Irr(G)$. 
Since $G$ is relative almost monomial w.r.t. $N$, there exists a subgroup $H\leqslant G$ containing $N$ and a irreducible character $\psi\in \Irr(H)$ such that $\psi_N \in \Irr(N)$, $\chi\in\supp(\psi^G)$ and $\phi\notin \supp(\psi^G)$. Since $\langle \chi_H, \psi \rangle \neq 0$ and $A\subseteq \ker \chi$, it follows that
$$0 \neq \langle 1_A^H, \psi \rangle = \langle 1_A, \psi_A \rangle.$$
Hence, by Lemma \ref{lema51}, we have that $A\subseteq \Ker(\psi)$. It follows that $\psi$ defines a character $\tilde \psi$ on $G/A$,
which is irreducible on $H/A$. Moreover, $\tilde \psi_{N/A}$ is also irreducible, as $\psi_N$ is irreducible. Thus, $G/A$ is
relative almost monomial w.r.t. $N/A$.
\end{proof}

The following Theorem is similar to Problem (6.11) from \cite{isaac}.

\begin{teor}\label{58}
Let $A\unlhd G$ be abelian. If $G$ is almost monomial, then $G$ is relative almost monomial w.r.t. $A$.
\end{teor}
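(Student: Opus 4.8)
The plan is to use the almost monomiality of $G$ to separate a given pair $\chi\neq\phi\in\Irr(G)$ by a linear character, and then to enlarge the witnessing subgroup so that it contains $A$ while preserving the separation, at the cost of passing to a Clifford correspondent. The guiding observation is that, since $A$ is abelian, a character $\eta$ of a subgroup $H'$ with $A\subseteq H'$ satisfies $\eta_A\in\Irr(A)$ if and only if $\eta$ is linear. Thus Definition \ref{51} (with $N=A$) asks precisely for a \emph{linear} character of a subgroup containing $A$ which separates $\chi$ from $\phi$; this is what I must produce.

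First I would fix $\chi\neq\phi\in\Irr(G)$ and, using that $G$ is almost monomial, choose $H\leqslant G$ and a linear character $\lambda$ of $H$ with $\chi\in\supp(\lambda^G)$ and $\phi\notin\supp(\lambda^G)$, i.e.\ $\langle\chi_H,\lambda\rangle>0$ and $\langle\phi_H,\lambda\rangle=0$. The only defect is that $H$ need not contain $A$. To repair this I would pass to $AH$ (a subgroup since $A\unlhd G$, and $A\subseteq AH\subseteq G$) and consider $\lambda^{AH}$. Writing $\lambda^{AH}=\sum_k m_k\psi_k$ with $\psi_k\in\Irr(AH)$ and $m_k>0$, transitivity of induction gives $\lambda^G=\sum_k m_k\psi_k^{G}$. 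Since $\phi\notin\supp(\lambda^G)$, no $\psi_k^{G}$ contains $\phi$; and since $\langle\chi_{AH},\lambda^{AH}\rangle=\langle\chi_H,\lambda\rangle>0$ by Frobenius reciprocity, some constituent $\psi:=\psi_{k_0}$ of $\lambda^{AH}$ is also a constituent of $\chi_{AH}$, so $\chi\in\supp(\psi^{G})$. Hence $\psi\in\Irr(AH)$ separates $\chi$ from $\phi$ with $A\subseteq AH$; what remains is to make $\psi$ linear.

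To linearize, I would apply Clifford theory to $A\unlhd AH$. Let $\theta\in\Irr(A)$ be a constituent of $\psi_A$, so $\psi_A=e\sum_{i=1}^{t}\theta_i$ over the $AH$-orbit of $\theta$, and let $T:=I_{AH}(\theta)$ be its inertia group; note $A\subseteq T\subseteq AH\subseteq G$, since the abelian group $A$ fixes $\theta$. By the Clifford correspondence there is $\eta\in\Irr(T\mid\theta)$ with $\eta^{AH}=\psi$, hence $\eta^{G}=\psi^{G}$, so $\eta$ still separates $\chi$ from $\phi$, while $\eta_A=e\theta$ and $\eta(1)=e$. It then suffices to prove $e=1$: this forces $\eta(1)=1$, i.e.\ $\eta$ linear with $\eta_A=\theta\in\Irr(A)$, exactly as Definition \ref{51} demands, and the pair $(T,\eta)$ witnesses relative almost monomiality for $\chi,\phi$.

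The crux — and the only point where abelianness of $A$ is genuinely used — is this multiplicity bound. Applying Mackey's formula to the single double coset in $A\backslash AH/H$ gives
$$ (\lambda^{AH})_A=(\lambda_{A\cap H})^{A}. $$
Because $A$ is abelian and $\lambda_{A\cap H}$ is linear, Frobenius reciprocity shows $(\lambda_{A\cap H})^{A}=\sum_{\theta'_{A\cap H}=\lambda_{A\cap H}}\theta'$ is multiplicity-free. Writing $\lambda^{AH}=m\psi+\Delta$ with $m\geq 1$ and $\Delta$ a character not containing $\psi$, and restricting to $A$, I get $e\leq me\leq\langle(\lambda^{AH})_A,\theta\rangle\leq 1$, whence $e=1$. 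I expect this multiplicity-freeness step to be the main obstacle; the rest is a routine assembly of Frobenius reciprocity, transitivity of induction, and the Clifford correspondence.
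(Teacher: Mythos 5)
Your proof is correct and follows essentially the same route as the paper's: pass from the witnessing pair $(H,\lambda)$ to the subgroup $AH$, use the Mackey-type identity $(\lambda^{AH})_A=(\lambda_{A\cap H})^A$ together with the abelianness of $A$ to see that this restriction is multiplicity-free, and then apply the Clifford correspondence over $A\unlhd AH$ to replace the chosen constituent of $\lambda^{AH}$ by a linear character of the inertia group that still separates $\chi$ from $\phi$. The only cosmetic difference is that you obtain multiplicity-freeness directly from Frobenius reciprocity between linear characters, whereas the paper extends $\lambda_{A\cap H}$ to a linear character $\mu$ of $A$ and invokes Gallagher's theorem, and you bound the ramification index $e$ where the paper runs the equivalent degree count $s=[KA:H]\psi(1)$.
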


\begin{proof}
Let $\chi\neq\phi\in\Irr(G)$.
Since $G$ is almost monomial, there exists a subgroup $K\leqslant G$ and a linear character $\lambda$ of $K$ such that
$\chi\in\Cons(\lambda^G)$ and $\psi\notin\Cons(\lambda^G)$. Consider the subgroup $KA$ of $G$. From Lemma \ref{41} 
it follows that $(\lambda^{KA})_A=(\lambda_{K\cap A})^A$. Since $A$ is abelian, we can find a linear character $\mu$ of $A$
such that $\lambda_{K\cap A}=\mu_{K\cap A}$. Now applying \cite[Corollary 6.17]{isaac} (Gallagher) to $K\cap A \unlhd A$ and $\lambda$, it follows that
$$ (\lambda_{K\cap A})^A=(\mu_{K\cap A})^A = \sum_{\nu \in \Irr(A/K\cap A)}\mu\nu = \mu_1+\mu_2+\cdots+\mu_r,$$
and, moreover, the linear characters $\mu_i$, $1\leq i\leq r$, are distinct and are all the irreducible components of $(\lambda_{K\cap A})^A$.
Hence $(\lambda^{KA})_A$ is the sum of distinct conjugates of $\mu$, where $\mu_1=\mu$. 

Let $\theta\in\Cons(\lambda^{KA})$ such that $\chi \in \Cons(\theta^G)$. Without any loss of generality, we may assume that
$\theta_A=\mu_1+\cdots+\mu_s$, where $s\leq r$. Let $H:=I_{KA}(\mu)$ be the inertia group of $\mu$.
Applying Clifford's theorem \cite[Theorem 6.11]{isaac}, it follows that there exists an irreducible character $\psi$ of $H$
with $\langle \psi_{A},\mu\rangle >0$ such that $\psi^{KA}=\theta$. We have that
\begin{equation}\label{cucu}
 s=\theta_A(1)=\theta(1)=\psi^{KA}(1)= [KA:H]\psi(1).
\end{equation}
On the other hand, according to \cite[Theorem 6.2]{isaac} and the definition of the inertia group $I_{KA}(\mu)$, 
since $\theta_A=\mu_1+\cdots+\mu_s$, it follows that $s=[KA:H]=$ the size of the orbit of $\mu$ in $KA$.
From \eqref{cucu} it follows that $\psi(1)=1$, hence $\psi_A$ is linear. Note that $\psi_A=\mu$.
On the other hand, since $\Cons(\psi^G)\subseteq \Cons(\lambda^G)$, it follows that $\phi\notin \Cons(\psi^G)$.
\end{proof}

Note that, in the hypothesis of Theorem \ref{58}, according to Ito's theorem \cite[Theorem 6.15]{isaac}, $\chi(1)|[G:A]$ for
all $\chi\in \Irr(G)$.

\section{Two subclasses of almost monomial groups}

\begin{dfn}\label{61}
A group $G$ is called \emph{normally almost monomial} (or a \emph{$nAM$-group}), if for any $\chi \neq \phi \in \Irr(G)$, there 
exists a normal subgroup $N\unlhd G$ and a linear character $\lambda$ of $N$ such that 
$\chi\in\Cons(\lambda^G)$ and $\phi\notin\Cons(\lambda^G)$.

A group $G$ is called \emph{subnormally almost monomial} (or a \emph{$sAM$-group}), if for any $\chi \neq \phi \in \Irr(G)$, there 
exists a subnormal subgroup $H\unlhd\unlhd G$ and a linear character $\lambda$ of $H$ such that 
$\chi\in\Cons(\lambda^G)$ and $\phi\notin\Cons(\lambda^G)$.
\end{dfn}

\begin{teor}\label{62}
 The class of nAM-groups (or sAM-groups) is closed under taking factor groups and direct products.
\end{teor}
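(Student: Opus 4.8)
The plan is to treat the normal and the subnormal cases in parallel, since the only feature distinguishing $nAM$- and $sAM$-groups from ordinary $AM$-groups is that the auxiliary subgroup must be normal (resp.\ subnormal), and both normality and subnormality are preserved by the two constructions needed. I will handle factor groups and direct products separately, using throughout that $\chi\in\Cons(\lambda^G)$ is equivalent, by Frobenius reciprocity, to $\langle\lambda,\chi_N\rangle>0$ for a character $\lambda$ of $N\leqslant G$.

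\textbf{Closure under factor groups.} Let $M\unlhd G$ and let $\tilde\chi\neq\tilde\phi\in\Irr(G/M)$; they inflate to $\chi,\phi\in\Irr(G)$ with $M\subseteq\Ker(\chi)\cap\Ker(\phi)$. Applying the $nAM$ (resp.\ $sAM$) property of $G$ to $\chi\neq\phi$ yields a normal (resp.\ subnormal) subgroup $N$ and a linear character $\lambda$ of $N$ with $\chi\in\Cons(\lambda^G)$ and $\phi\notin\Cons(\lambda^G)$. The first key step is to observe that $\lambda$ is trivial on $M\cap N$: since $\lambda$ is a constituent of $\chi_N$ and $\chi_N$ is trivial on the normal subgroup $M\cap N\unlhd N$ (because $M\cap N\subseteq\Ker(\chi)$), every irreducible constituent of $\chi_N$, in particular $\lambda$, has $M\cap N$ in its kernel. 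Hence $\lambda$ descends to a linear character $\bar\lambda$ of $NM/M\cong N/(N\cap M)$, and $NM/M$ is normal (resp.\ subnormal) in $G/M$. The second, computational, step is the identity
$$\langle\bar\lambda^{G/M},\tilde\psi\rangle=\langle\lambda^G,\psi\rangle\quad\text{for every }\psi\in\Irr(G)\text{ with }M\subseteq\Ker(\psi),$$
which I would prove by a direct coset computation: by Frobenius reciprocity on both sides it reduces to comparing $\langle\bar\lambda,\tilde\psi_{NM/M}\rangle$ with $\langle\lambda,\psi_N\rangle$, and since $\bar\lambda$ and $\tilde\psi$ are constant on the fibres of $N\to N/(N\cap M)$ the two inner products coincide. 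Applying this to $\psi=\chi$ and $\psi=\phi$ gives $\tilde\chi\in\Cons(\bar\lambda^{G/M})$ and $\tilde\phi\notin\Cons(\bar\lambda^{G/M})$, so $G/M$ is $nAM$ (resp.\ $sAM$).

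\textbf{Closure under direct products.} Suppose $G_1,G_2$ are $nAM$ (resp.\ $sAM$) and let $\chi\neq\phi\in\Irr(G_1\times G_2)$. By Lemma \ref{54} write $\chi=\chi_1\times\chi_2$ and $\phi=\phi_1\times\phi_2$; since $\chi\neq\phi$ we may assume $\chi_1\neq\phi_1$. Applying the hypothesis to $G_1$ gives a normal (resp.\ subnormal) $N_1$ and a linear $\lambda_1\in\Lin(N_1)$ with $\chi_1\in\Cons(\lambda_1^{G_1})$ and $\phi_1\notin\Cons(\lambda_1^{G_1})$. I then take $N:=N_1\times\{1\}$, which is normal (resp.\ subnormal) in $G_1\times G_2$, together with the linear character $\lambda_1\times 1_{\{1\}}$. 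Since
$$(\lambda_1\times 1_{\{1\}})^{G_1\times G_2}=\lambda_1^{G_1}\times\rho_{G_2},$$
its set of constituents is $\Cons(\lambda_1^{G_1})\times\Irr(G_2)$, which contains $\chi_1\times\chi_2$ but not $\phi_1\times\phi_2$. Thus $G_1\times G_2$ is $nAM$ (resp.\ $sAM$). Conversely, each factor $G_i$ is a quotient of $G_1\times G_2$ (by $\{1\}\times G_2$, resp.\ $G_1\times\{1\}$), so the converse implication is already contained in the factor-group case.

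\textbf{Main obstacle.} The genuinely delicate part is the factor-group case: I must check that the subgroup produced by the hypothesis descends to $G/M$, which needs both the kernel observation ($\lambda$ trivial on $M\cap N$, required even to define $\bar\lambda$) and the inner-product identity above. The parallel treatment of the subnormal case rests on the elementary but necessary facts that the image of a subnormal subgroup under $G\to G/M$ is subnormal, and that $N_1\times\{1\}$ is subnormal in $G_1\times G_2$ whenever $N_1$ is subnormal in $G_1$; these I would verify by lifting the respective subnormal chains.
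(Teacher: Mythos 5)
Your proof is correct and follows exactly the route the paper indicates: it redoes the factor-group and direct-product arguments for almost monomial groups (descending $\lambda$ to $NM/M$ after checking $N\cap M\subseteq\Ker(\lambda)$, and inducing $\lambda_1\times 1$ from $N_1\times\{1\}$), adding only the observations that (sub)normality is preserved under quotient maps and under forming $N_1\times\{1\}\leqslant G_1\times G_2$. This is precisely the content the paper compresses into its one-sentence reference to the proofs of Theorems 2.2 and 2.3 of \cite{lucrare}.
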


\begin{proof}
The proof is similar to the proof of \cite[Theorem 2.2]{lucrare} and \cite[Theorem 2.3]{lucrare}, taking into account the 
fact that if $N_1\unlhd G_1$ and $N_2\unlhd G_2$, then $N_1\times N_2 \unlhd G_1\times G_2$, and if $N\subseteq N_1\subseteq G$
such that $N\unlhd G$ and $N_1\unlhd G$, then $N_1/N\unlhd G/N$.
\end{proof}

%In section $8$ we provide two functions in GAP \cite{gap} which determine if a group $G$ is normally almost monomial,
%respectively subnormally almost monomial.

\begin{obs} \emph{
Let $AM$, $nAM$ and $sAM$ be the classes of almost monomial, normally almost monomial and subnormally almost monomial groups, respectively.
Obviously, we have the inclusions $nAM \subset sAM \subset AM$. These inclusions are strict. For example, the group 
$G:=\SL_{2}(\mathbb F_3)$ is almost monomial but is not subnormally almost monomial. 
The group $G:=\text{SmallGroup}(72,40)=(S_3\times S_3)\rtimes \CC_2$ is subnormally almost monomial but is not normally almost monomial.}
\end{obs}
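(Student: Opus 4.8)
The plan is to dispatch the inclusions first and then reduce strictness to the two proposed examples. The inclusions $nAM\subseteq sAM\subseteq AM$ are immediate from the definitions: a normal subgroup is subnormal and a subnormal subgroup is a subgroup, so any witness $(N,\lambda)$ with $N\unlhd G$ for the $nAM$-condition is a fortiori a subnormal witness, and any subnormal witness is a witness for the $AM$-condition. Thus only strictness needs argument, and for that it suffices to analyse $SL_2(\mathbb F_3)$ and $\mathrm{SmallGroup}(72,40)$.

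For $G=SL_2(\mathbb F_3)$, membership in $AM$ is already recorded in Section~1. To see $G\notin sAM$ I would enumerate the subnormal subgroups. Since the Sylow $2$-subgroup $Q_8$ is normal while the four Sylow $3$-subgroups are not (a subnormal Sylow subgroup is normal), the subnormal subgroups are exactly $\{1\}$, the centre $Z\cong C_2$, the three cyclic subgroups of order $4$ inside $Q_8$, the group $Q_8$ itself, and $G$. For each such $H$ and each linear $\lambda$ I would compute $\Cons(\lambda^G)$ via $\langle\lambda^G,\chi\rangle=\langle\lambda,\chi_H\rangle$. The key point is that the three irreducible characters of degree $2$ restrict identically to each of these proper subnormal subgroups, while induction from $G$ itself produces a single linear character; consequently every support $\Cons(\lambda^G)$ contains either all three degree-$2$ characters or none of them. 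Hence no subnormal pair $(H,\lambda)$ separates two of them, so $G\notin sAM$; by contrast the non-subnormal subgroups of order $3$ and $6$ available in the $AM$-argument do separate them.

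For $G=\mathrm{SmallGroup}(72,40)\cong S_3\wr C_2=(S_3\times S_3)\rtimes C_2$ write $B=S_3\times S_3$ and let $\theta$ be the degree-$2$ irreducible of $S_3$. By the character theory of wreath products $\Irr(G)$ has nine members, of degrees $1,1,1,1,2,4,4,4,4$; in particular the swap-invariant irreducible $\theta\times\theta$ of $B$ has two extensions $\Theta_+,\Theta_-$ of degree $4$, related by $\Theta_-=\delta\,\Theta_+$, where $\delta$ is the linear character of $G$ with kernel $B$. To prove $G\notin nAM$ I would show that $\Theta_+,\Theta_-$ cannot be separated by any normal subgroup: they agree on $B$, so no normal subgroup contained in $B$ separates them; the only normal subgroups not contained in $B$ are the two index-$2$ subgroups other than $B$, from which induction of a linear character has degree $2<4$ and so cannot involve $\Theta_\pm$, together with $G$ itself, which yields a single linear character. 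Thus no normal witness exists.

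For $G\in sAM$ the point is that subnormality supplies a subgroup outside $B$ of small order. I would exhibit the diagonal copy $K=\{(x,x):x\in C_3\}\rtimes\langle w\rangle\cong S_3$, where $w=(t,t)\sigma$ with $t$ a transposition; a direct check gives $K\unlhd N\unlhd G$ for a normal subgroup $N$ of order $36$, so $K$ is subnormal but, having order $6$, is not among the normal subgroups, hence not normal. Using the wreath-product formula $\Theta_+((h_1,h_2)\sigma)=\theta(h_1h_2)$ one computes $\langle 1_K,(\Theta_+)_K\rangle=2$ and $\langle 1_K,(\Theta_-)_K\rangle=0$, so $1_K^{\,G}$ separates $\Theta_+$ from $\Theta_-$ (and the sign character of $K$ separates them the other way); the remaining pairs already differ on $B$ and are separated by subgroups of $B$, which are subnormal. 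The main obstacle is exactly this last verification for the order-$72$ group: one must pin down its subnormal lattice precisely, locating the non-normal diagonal $S_3$ that the $nAM$-argument cannot use, and then confirm separation for every pair among the nine irreducibles, which rests on the explicit restriction computations sketched above.
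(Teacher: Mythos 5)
The paper gives no written argument for this remark; both examples are certified computationally, via the GAP functions \texttt{IsSubnormallyAlmostMonomial} and \texttt{IsNormallyAlmostMonomial} of Section~6, so any hand proof such as yours is necessarily a different route. Your inclusions are correct, and your treatment of $SL_2(\mathbb F_3)$ is essentially complete: every proper subnormal subgroup lies inside $Q_8$ (your parenthetical fact, applied in the form $O_3(H)\leq O_3(G)=1$ for $H\unlhd\unlhd G$, also rules out the subgroups of orders $3$ and $6$ --- this exclusion deserves an explicit sentence), the three degree-$2$ irreducibles differ by linear characters of $G/Q_8\cong C_3$ and hence restrict identically to every subgroup of $Q_8$, so by Frobenius reciprocity no subnormal pair $(H,\lambda)$ separates any two of them, while induction from $G$ itself has singleton support. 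That proves $SL_2(\mathbb F_3)\notin sAM$. Likewise your $nAM$-analysis of $G=\text{SmallGroup}(72,40)$ is correct: the normal subgroups are $1$, $C_3\times C_3$, the order-$18$ subgroup, $B$, the two other index-$2$ subgroups $N_1,N_2$, and $G$, and your degree and restriction arguments eliminate all of them for the pair $(\Theta_+,\Theta_-)$; the diagonal $K\cong S_3$ is indeed subnormal via $K\unlhd N_1\unlhd G$ and not normal, and your inner-product computations $\langle 1_K,(\Theta_\pm)_K\rangle=2,0$ check out.

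The genuine gap is the final blanket claim that \emph{the remaining pairs already differ on $B$ and are separated by subgroups of $B$}. This is false twice over. First, besides $\{\Theta_+,\Theta_-\}$ there are two more pairs that agree on $B$, namely $\{1_G,\delta\}$ and $\{\mu,\mu\delta\}$ (the two extensions of $1_B$, respectively of $\epsilon\times\epsilon$); by exactly the Frobenius argument you deploy against $nAM$, no subnormal subgroup contained in $B$ can separate either pair, so as written your verification of $G\in sAM$ is incomplete for these four ordered pairs. The repair is cheap and uses your own $K$: since $\delta_K$ is the sign character of $K\cong S_3$, the characters $1_K$ and the sign of $K$ separate $1_G$ from $\delta$ in both directions, and similarly for $\mu$ versus $\mu\delta$ (alternatively, induce the restricted linear characters $1_{N_1}$, $\mu_{N_1}$ from the normal subgroup $N_1\not\subseteq B$). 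Second, ``subgroups of $B$'' are not automatically subnormal in $G$, so the disjoint-orbit pairs also need a word: here one should note that $S_3\times S_3$ is monomial with every irreducible induced from a linear character of a subgroup $N_\alpha\times N_\beta$ with $N_\alpha,N_\beta\in\{A_3,S_3\}$, and such subgroups are normal in $B$, hence subnormal in $G$; then for $\theta_0\in\Cons(\chi_B)\setminus\Cons(\phi_B)$ one gets $\langle\theta_0^G,\chi\rangle>0$ and $\langle\theta_0^G,\phi\rangle=0$ as desired. With these two repairs your argument becomes a correct, fully theoretical replacement for the paper's computer check.
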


\section{GAP functions and computer experiments}

The following functions in GAP \cite{gap} determines if a group is almost monomial, relative almost monomial
with respect to a subgroup, normally or subnormally almost monomial.
\newline

\noindent
gap$>$  IsAlmostMonomialConditional:= function( G, subgroups, charH, charcond )\\
    local n, num, M, H, lambda, ind, constpos, j, k; \\
    n:= NrConjugacyClasses( G );\\
    num:= n * (n-1);\\
    M:= IdentityMat( n );\\
    \# \textit{Run over the allowed subgroups.}\\
    for H in subgroups do\\
    \# \textit{Run over the allowed characters of the subgroup.}\\
    for lambda in charH( H ) do\\
    ind:= InducedClassFunction( lambda, G );\\
    constpos:= PositionsProperty( Irr( G ),\\
    chi -$>$ ScalarProduct( ind, chi ) $<>$ 0 );\\
    for j in constpos do\\
    for k in Difference( [ 1 .. n ], constpos ) do\\
    \# \textit{'ind' yields a witness for the pair '(j,k)'}\\
    if M[j,k] = 0 then\\
    M[j,k]:= 1;\\
    num:= num - 1;\\
    if num = 0 then return true;fi;\\
    fi;od;od;od;od;\\
    %od;\\
    %od;\\
    %od;\\
    %od;\\
    return false;\\
end;;\\
  \\
  IsAlmostMonomial:= function( G )\\
  \;\;    return IsMonomialGroup( G ) or\\
  \;\;           IsAlmostMonomialConditional( G,\\
  \;\;\;\;               List( ConjugacyClassesSubgroups( G ), Representative ),\\
  \;\;\;\;               LinearCharacters, ReturnTrue );\\
  end;;\\
\\
  IsRelativeAlmostMonomial:= function( G, N )\\
 \;\;     local epi, subgroups;\\
 \;\;     if not ( IsSubgroup( G, N ) and IsNormal( G, N ) ) then\\
 \;\;\;\;       return fail;\\
 \;\;     fi;\\
 \;\;     epi:= NaturalHomomorphismByNormalSubgroup( G, N );\\
  \;\;    subgroups:= List( ConjugacyClassesSubgroups( Image( epi ) ),\\
  \;\;\;\;                      C -$>$ PreImage( epi, Representative( C ) ) );\\
  \;\;    return IsAlmostMonomialConditional( G,subgroups,Irr,\\
   \;\;\;\;              chi -$>$ RestrictedClassFunction( chi, N ) in Irr( N ) );\\
  end;;\\ \\
  IsNormallyAlmostMonomial:= function( G )\\
   \;\;   return IsAlmostMonomialConditional( G,NormalSubgroups( G ),\\
   \;\;\;\;              LinearCharacters,ReturnTrue );\\
  end;;\\
  IsSubnormallyAlmostMonomial:= function( G )\\
  \;\;    return IsAlmostMonomialConditional( G,\\
  \;\;\;\;               Filtered( List( ConjugacyClassesSubgroups( G ), Representative ),\\
  \;\;\;\;                         H -$>$ IsSubnormal( G, H ) ),\\
 \;\;\;\;               LinearCharacters,ReturnTrue );\\
end;;\\

We used the following code in GAP \cite{gap} to verify an example of a normal Hall subgroup $N\unlhd G$
such that $G$ is almost monomial and $N$ is not:

\noindent\\
  gap$>$ g:= SmallGroup( 750, 6 );;\\
 IsAlmostMonomial( g );\\
  true\\
 n:= Subgroup( g, GeneratorsOfGroup( g )\{[2..5]\} );;\\
%  Size( n );\\
%  375\\
  IsAlmostMonomial( n );\\
  false\\
  IdGroup( n );\\
  $[$ 375, 2 $]$\\

\textbf{Aknowledgment}. 
%I would like to thank to the anonymous referee from the Journal of Algebra. 
                        % Although he rejected my paper, he helped me to significantly improve it, especially with regarding the Gap code.
The author was supported by a grant of the Ministry of Research, Innovation and Digitization, CNCS - UEFISCDI, 
project number PN-III-P1-1.1-TE-2021-1633, within PNCDI III.

\addcontentsline{toc}{section}{References}
{}

\end{document}